\newcommand{\ud}{\,\mathrm{d}}
\crefname{hypothesis}{Hypothesis}{Hypotheses}
\newtheorem{scheme}{Scheme}
\theoremstyle{plain}
\newtheorem{exam}{Example}
\title{A Primal-dual Forward-backward Splitting Method for Cross-diffusion Gradient Flows with General Mobility Matrices\thanks{Submitted to the editors DATE.
\funding{CW is supported by the National Natural Science Foundation of China under grants 12371392 and 12431015.}}}
\author{Yunhong Deng\thanks{School of Mathematics, University of Minnesota,
Minneapolis, MN 55455, USA, and School of Mathematical Sciences, University of Electronic Science and Technology of China, Chengdu, Sichuan 611731, China (\email{deng0335@umn.edu}).}
\and Chaozhen Wei\thanks{School of Mathematical Sciences, University of Electronic Science and Technology of China, Chengdu, Sichuan 611731, China (\email{cwei4@uestc.edu.cn}).}}
\newcommand*{\addFileDependency}[1]{
  \typeout{(#1)}
  \@addtofilelist{#1}
  \IfFileExists{#1}{}{\typeout{No file #1.}}
}
\begin{document}

\maketitle

\begin{abstract}
In this work, we construct a primal-dual forward-backward (PDFB) splitting method for computing a class of cross-diffusion systems that can be formulated as gradient flows under transport distances induced by matrix mobilities. By leveraging their gradient flow structure, we use minimizing movements as the variational formulation and compute these cross-diffusion systems by solving the minimizing movements as optimization problems at the fully discrete level. Our strategy to solve the optimization problems is the PDFB splitting method outlined in our previous work \cite{PDFB2024}. The efficiency of the proposed PDFB splitting method is demonstrated on several challenging cross-diffusion equations from the literature. 
\end{abstract}

\begin{keywords}
Cross-diffusion gradient flows, structure-preserving methods, minimizing movement schemes, optimal transport
\end{keywords}

\begin{AMS}
35A15, 47J25, 47J35, 49M29, 65K10, 76M30
\end{AMS}
\section{Introduction}
Cross-diffusion systems arise in a variety of applications involving multi-species or multi-component systems including, to name just a few, chemistry \cite{GiovangigliVincent2004, DesvillettesLaurent2007}, biology \cite{SKT, david2024degenerate}, thin film dynamics \cite{Garcke2006, Jachalski2012, Thiele2012}, and multi-component flows \cite{Elliott2000, Walkington, murphy2019control}. In this work, we consider a general cross-diffusion system of the following form \cite{Sun2018, Carrillo2022},
\begin{equation}\label{1-1}
    \partial_{t}\boldsymbol{\mu} = \nabla \cdot \bigg[M(\boldsymbol{\mu}) \nabla \frac{\delta \mathcal{E}[\boldsymbol{\mu}]}{\delta \boldsymbol{\mu}}\bigg] \ \text{ in } \Omega,
\end{equation}
with a non-flux boundary condition,
\begin{equation*}
    \bigg[M(\boldsymbol{\mu}) \nabla \frac{\delta \mathcal{E}[\boldsymbol{\mu}]}{\delta \boldsymbol{\mu}}\bigg]_{\alpha} \cdot \textbf{n} = 0 \text{ on } \partial\Omega,\quad \forall 1 \le \alpha \le n,
\end{equation*}
where $\boldsymbol{\mu} = [\mu_{\alpha}]_{\alpha = 1}^{n}\in \mathbb{R}^{n \times 1}$ is a vector field representing the population densities or concentrations of $n$ type of species, $\Omega\subset\mathbb{R}^d$ is a regular bounded domain, and $\textbf{n}$ is the unit outer normal vector of $\Omega$. In particular, we consider the concentration-dependent mobility for general cross-diffusion systems
\begin{equation*}
    M(\boldsymbol{\mu}) = [M_{\alpha \beta}(\boldsymbol{\mu})]_{\alpha, \beta = 1}^{n} \in \mathbb{R}^{n \times n},
\end{equation*}
which is a (symmetric) positive-definite matrix field. By denoting $[\,\cdot\,]_{\alpha}$ the $\alpha$-th row of the matrix, the gradient operator $\nabla$ acting on a vector field $\boldsymbol{\mu} = [\mu_{\alpha}]_{\alpha = 1}^{n}\in\mathbb{R}^{n \times 1}$, and the divergence operator $\nabla \cdot$ acting on a matrix field $\textbf{u} = [u_{\alpha, \beta}]_{\alpha, \beta = 1}^{n, d}\in\mathbb{R}^{n \times d}$, are hereinafter defined row-wise operations, i.e.,
\begin{equation*}
    \nabla \boldsymbol{\mu}(x) = \left[\begin{array}{ccc}
        \displaystyle\frac{\partial \mu_{1}(x)}{\partial x_{1}} &\cdots & \displaystyle\frac{\partial \mu_{1}(x)}{\partial x_{d}}\\
        \vdots & & \vdots\\
        \displaystyle\frac{\partial \mu_{n}(x)}{\partial x_{1}} &\cdots & \displaystyle\frac{\partial \mu_{n}(x)}{\partial x_{d}}
    \end{array}\right],\quad
    \nabla \cdot \textbf{u}(x) =
    \sum_{\beta = 1}^{d} \left[\begin{array}{c}
        \displaystyle\frac{\partial u_{1, \beta}(x)}{\partial x_{\beta}}\\
        \vdots\\
        \displaystyle\frac{\partial u_{n, \beta}(x)}{\partial x_{\beta}}
    \end{array}\right],
\end{equation*}
where $x = (x_{1}, \cdots, x_{d})$. Consider the free energy functional $\mathcal{E}[\boldsymbol{\mu}]$ in a general form given by
\begin{equation}\label{energy}
    \mathcal{E}[\boldsymbol{\mu}] = \int_{\Omega} \,U(\boldsymbol{\mu}(x)) \,\mathrm{d}x + \int_{\Omega} \,V(x) \cdot \boldsymbol{\mu}(x)\,\mathrm{d}x + \sum_{\alpha = 1}^{n}\int_{\Omega}\frac{\varepsilon^{2}}{2}|\nabla \mu_{\alpha}(x)|^{2} \,\mathrm{d}x,
\end{equation}
where the Fr\'echet derivative of $\mathcal{E}[\boldsymbol{\mu}]$ is given by
\begin{equation}\label{energy_first}
    \frac{\delta \mathcal{E}[\boldsymbol{\mu}]}{\delta \boldsymbol{\mu}} = \bigg[\frac{\delta \mathcal{E}[\boldsymbol{\mu}]}{\delta \mu_{\alpha}}\bigg]_{\alpha = 1}^{n} = \bigg[\frac{\partial U(\boldsymbol{\mu}(x))}{\partial \mu_{\alpha}} + V_{\alpha}(x) - \varepsilon^{2} \Delta \mu_{\alpha}(x)\bigg]_{\alpha = 1}^{n},
\end{equation}
where $U : \text{dom}(U) \subseteq \mathbb{R}^{n \times 1} \to \mathbb{R}$ represents the internal energy, and $V = [V_{\alpha}]_{\alpha = 1}^{n}: \Omega \to \mathbb{R}^{n \times 1}$ represents the confinement potential. The last term in \eqref{energy} represents the Dirichlet energy usually considered in phase separation problems, and when it is considered, we impose an additional Neumann boundary condition $\nabla \mu_{\alpha} \cdot \textbf{n} = 0$ on $\partial \Omega$ for all $\alpha$.

Efficient numerical methods for cross-diffusion systems have become a popular topic in modern computational physics. To reflect the fundamental properties of the physical system~\eqref{1-1}, it is important to construct numerical methods that preserve key structural properties, such as energy dissipation
\begin{equation*}
    \mathcal{E}[\boldsymbol{\mu}(t + \Delta t)] \le \mathcal{E}[\boldsymbol{\mu}(t)] \text{ for all } \Delta t > 0,
\end{equation*}
and bounded preservation $$0\preceq_{\mathcal{S}_{+}}M(\boldsymbol{\mu}(t)),$$ along the flow~\eqref{1-1}. For example, Bailo, Carrillo, and Hu \cite{Carrillo2022} construct a finite volume method via combining carefully chosen numerical fluxes and techniques for hyperbolic conservation laws, which preserves the natural bound of the numerical solution, and guarantees the energy dissipation at the fully discrete level. Sun, Carrillo, and Shu \cite{Sun2018} proposed a discontinuous Galerkin method which preserves the energy dissipation structure by choosing appropriate numerical fluxes, which also ensures the positivity of the numerical solution via a suitable scaling limiter.

Our approach to computing~\eqref{1-1} is based on the gradient flow framework from optimal transport theory developed over the past few decades \cite{Jordan1996, Otto2001, Carrillo2003, Ambrosio2005, Villani2003}. This framework allows us to interpret equations of the form~\eqref{1-1} as gradient flows and to characterize their dynamics via the minimizing movement scheme \cite{Jordan1996}. For general cross-diffusion systems~\eqref{1-1}, the gradient flow structure was established by Mielke \cite{MielkeAlexander2011}, Liero and Mielke \cite{LieroMatthias2013Gsag}, who formulated~\eqref{1-1} as a gradient flow with respect to a transport metric induced by the matrix mobility. This transport metric was further studied in \cite{Savare2016}. Here we summarize their construction in the following scheme, which generalizes several minimizing movement schemes \cite{Carrillo2019, Carrillo2023} originating from the seminal work \cite{Jordan1996} developed by Jordan, Kinderlehrer, and Otto.

\begin{definition}[Time-discrete scheme]\rm
Given $\tau > 0$ and an initial value $\boldsymbol{\mu}_{0}$, we define the time-discrete solution $\boldsymbol{\mu}_{\tau}$ by $\boldsymbol{\mu}_{\tau}(t) = \boldsymbol{\mu}^{k}$ when $(k - 1) \tau < t \le k \tau$, where $\boldsymbol{\mu}^{0} = \boldsymbol{\mu}_{0}$ and $(\boldsymbol{\mu}^{k})$ is defined iteratively via a sequence of optimization problems:
\begin{equation}\label{JKOMM}
    \boldsymbol{\mu}^{k + 1} = \arg\min_{\boldsymbol{\mu}} \tau \mathcal{E}[\boldsymbol{\mu}] + \frac{1}{2}D(\boldsymbol{\mu}, \boldsymbol{\mu}^{k})^{2},
\end{equation}
where $D(\boldsymbol{\mu}, \boldsymbol{\nu})$ is a transport distance defined as follows,
\begin{equation}\label{SSD}
    \begin{aligned}
        D(\boldsymbol{\mu}, \boldsymbol{\nu}) = \min_{\boldsymbol{\omega},\textbf{u}} \Bigg\{\int_{0}^{1}\int_{\Omega}\, \textbf{u} : M(\boldsymbol{\omega})\textbf{u} &\,\mathrm{d}x \mathrm{d}t : \boldsymbol{\omega}(0) = \boldsymbol{\mu} \text{ and } \boldsymbol{\omega}(1) = \boldsymbol{\nu}\\
        &\partial_{t}\boldsymbol{\omega} + \nabla\cdot \big[M(\boldsymbol{\omega})\textbf{u}\big] = 0 \text{ in } \Omega \times (0, 1)\Bigg\}^{\frac{1}{2}}.
    \end{aligned}
\end{equation}
where $\textbf{u} : \textbf{v} = \sum_{\alpha, \beta = 1}^{n, d} u_{\alpha, \beta} v_{\alpha, \beta}$ is the Frobenius inner product between two matrices $\textbf{u} = [u_{\alpha, \beta}]_{\alpha, \beta = 1}^{n, d}$ and $\textbf{v} = [v_{\alpha, \beta}]_{\alpha, \beta = 1}^{n, d}$.
\end{definition}

We will also provide a derivation of this minimizing movement scheme in Section~\ref{sec:1}. Substantial progress has been made in the rigorous analysis of the scheme~\eqref{JKOMM}. For $n = 1$, the existence of a unique solution $\boldsymbol{\mu}_{\tau}$ and its convergence as $\tau \to 0$ to the corresponding gradient flow have been established for Wasserstein gradient flows \cite{Jordan1996, Ambrosio2005, Blanchet2008} and for general concave mobilities \cite{Lisini2012}. For coupled Wasserstein gradient flows where, $M(\boldsymbol{\mu}) = \text{diag}(\boldsymbol{\mu})$, the minimizing movement scheme has been rigorously analyzed \cite{kim2018nonlinear, ducasse2023cross}. For general cross-diffusion gradient flows, Zinsl and Matthes \cite{ZinslJonathan2015Tdag} proved the existence of a unique solution $\boldsymbol{\mu}_{\tau}$ and its convergence to~\eqref{1-1} in the one-dimensional case ($d = 1$).

In this work, we compute~\eqref{1-1} by solving minimizing movement scheme~\eqref{JKOMM} at a discrete level as an optimization problem. There are several advantages to construct numerical solvers based on~\eqref{JKOMM}. For example, it guarantees energy dissipation and bounded preservation at the time discrete level (see Theorem~\ref{structure} below). Several progresses in numerical methods have been made in this topic in recent years, for example, Wasserstein gradient flows \cite{Benamou2016, Carrillo2019}, Wasserstein-like gradient flows with nonlinear mobilities \cite{Carrillo2023, PDFB2024}, and reaction-diffusion systems with diagonal matrix mobilities \cite{Li2023, fu2024generalized}.

This general form~\eqref{JKOMM} where $M(\boldsymbol{\mu})$ is non-diagonal can be non-trivial to compute. Current computational methods for cross-diffusion systems based on this framework are mainly restricted to the case where $M(\boldsymbol{\mu})$ is diagonal \cite{cances2019simulation, cances2022convergent, Li2023, fu2024generalized}. One technical difficulty in the general form~\eqref{JKOMM} is the fact that the mobility of each species are coupled. Thanks to our previous work \cite{PDFB2024}, we can overcome this technical difficulty by using the primal-dual forward-backward (PDFB) splitting method \cite{PDFB2024}, where one can decouple the mobility matrix through auxiliary dual variables.

The rest of the paper is organized as follows. In Section~\ref{sec:1}, we review the gradient flow structure of the cross-diffusion system~\eqref{1-1} and derive the corresponding minimizing movement scheme. In Section~\ref{sec:2}, we construct a staggered grid approximation of the variational problem~\eqref{JKOMM}, and derive a saddle point form that can be solved by operator splitting methods. In Section~\ref{sec:3}, we discuss the PDFB splitting method \cite{PDFB2024} to solve the saddle point form via optimization solvers. In Section~\ref{sec:4}, we present the numerical results of the PDFB splitting method on challenging numerical examples from the literature. Section~\ref{sec:5} is our conclusion and future directions.

\section{Gradient flow structure}\label{sec:1}
In this section, we define the gradient flow structure of~\eqref{1-1} via an energy dissipation inequality (EDI) form of gradient flows (see, for example, \cite{Mass2022, Fagioli2022, gao2023homogenization}), and obtain the minimizing movement scheme via AGS theory of gradient flows \cite{Ambrosio2005}.

\subsection{Energy dissipation inequality}
Equation~\eqref{1-1} defines a dissipative system where the free energy $\mathcal{E}[\boldsymbol{\mu}]$ acts as a Lyapunov functional. Moreover, let $t \mapsto \boldsymbol{\mu}(t)$ be a solution of~\eqref{1-1}, we have the following form of energy dissipation:
\begin{equation}\label{ed}
    \begin{aligned}
        \frac{\mathrm{d}}{\mathrm{d}t}\mathcal{E}[\boldsymbol{\mu}(t)] = -A\bigg(\boldsymbol{\mu}(t), -\frac{\delta \mathcal{E}[\boldsymbol{\mu}(t)]}{\delta \boldsymbol{\mu}}\bigg) \le 0,
    \end{aligned}
\end{equation}
where $A$ is the entropy dissipation functional (for $\boldsymbol{\varphi} : \Omega \to \mathbb{R}^{n \times 1}$) given by
\begin{equation*}
    A(\boldsymbol{\mu}, \boldsymbol{\varphi}) = \int_{\Omega}\, \nabla \boldsymbol{\varphi} : M(\boldsymbol{\mu})\nabla \boldsymbol{\varphi}\,\mathrm{d}x.
\end{equation*}
The form of dissipation in~\eqref{ed} gives us the following EDI form of the cross diffusion gradient flow~\eqref{1-1}.
\begin{lemma}[EDI form of gradient flow]\label{EDI-lemma}Let $t \mapsto \boldsymbol{\mu}(t)$ be the solution of~\eqref{1-1} with initial value $\boldsymbol{\mu}_{0}$, it can be equivalently characterized by
\begin{equation}\label{EDI}
    \mathcal{E}[\boldsymbol{\mu}(T)] + \int_{0}^{T} \frac{1}{2}A^{\ast}\big(\boldsymbol{\mu}(t), \partial_{t}\boldsymbol{\mu}(t)\big) + \frac{1}{2}A\bigg(\boldsymbol{\mu}(t), -\frac{\delta \mathcal{E}[\boldsymbol{\mu}(t)]}{\delta \boldsymbol{\mu}}\bigg)\,\textnormal{d}t \le \mathcal{E}[\boldsymbol{\mu}_{0}],
\end{equation}
where $A^{\ast}$ is the Legendre transform of $A$ in the second variable, given by
\begin{equation*}
    A^{\ast}\big(\boldsymbol{\mu}, \partial_{t}\boldsymbol{\mu}\big) = \int_{\Omega}\, \nabla \boldsymbol{\varphi}^{\ast} : M(\boldsymbol{\mu}) \nabla \boldsymbol{\varphi}^{\ast} \,\mathrm{d}x,
\end{equation*}
where $\boldsymbol{\varphi}^{\ast}$ is the solution to the following Poisson problem:
\begin{equation}\label{1-10}
    \partial_{t}\boldsymbol{\mu} - \nabla \cdot \big[M(\boldsymbol{\mu}) \nabla \boldsymbol{\varphi}^{\ast}\big] = 0 \text{ in } \Omega,\quad \nabla \boldsymbol{\varphi}^{\ast} \cdot \textnormal{\textbf{n}} = 0 \text{ on } \partial \Omega.
\end{equation}    
\end{lemma}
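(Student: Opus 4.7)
My plan is to derive~\eqref{EDI} by combining a chain rule for $\mathcal{E}$ along the curve with a Legendre--Young inequality between $\tfrac{1}{2}A$ and $\tfrac{1}{2}A^{\ast}$, and then to exploit the fact that equality in Young coincides exactly with the PDE~\eqref{1-1}. First I would work out the chain-rule identity
\begin{equation*}
\mathcal{E}[\boldsymbol{\mu}(T)]-\mathcal{E}[\boldsymbol{\mu}_{0}]=\int_{0}^{T}\!\int_{\Omega} \frac{\delta\mathcal{E}[\boldsymbol{\mu}]}{\delta\boldsymbol{\mu}}\cdot\partial_{t}\boldsymbol{\mu}\,\mathrm{d}x\,\mathrm{d}t
\end{equation*}
directly from~\eqref{energy} and~\eqref{energy_first}, using the Neumann condition $\nabla\mu_{\alpha}\cdot\mathbf{n}=0$ to discharge the boundary terms arising from the Dirichlet-energy piece. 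This reduces the statement to controlling the pairing between $-\tfrac{\delta\mathcal{E}[\boldsymbol{\mu}]}{\delta\boldsymbol{\mu}}$ and $\partial_{t}\boldsymbol{\mu}$.

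Next I would establish a Young-type inequality adapted to the pair $(A,A^{\ast})$: for any smooth $\boldsymbol{\varphi}$ and any $w$ with $\int_{\Omega}w\,\mathrm{d}x=0$,
\begin{equation*}
\int_{\Omega} \boldsymbol{\varphi}\cdot w\,\mathrm{d}x \;\le\; \tfrac{1}{2}A(\boldsymbol{\mu},\boldsymbol{\varphi})+\tfrac{1}{2}A^{\ast}(\boldsymbol{\mu},w),
\end{equation*}
with equality iff $w=-\nabla\cdot[M(\boldsymbol{\mu})\nabla\boldsymbol{\varphi}]$. The argument is to let $\boldsymbol{\varphi}^{\ast}$ solve~\eqref{1-10} with source $w$, expand the nonnegative quantity $\tfrac{1}{2}A(\boldsymbol{\mu},\boldsymbol{\varphi}-\boldsymbol{\varphi}^{\ast})\ge 0$, and identify the cross term $\int_{\Omega}\nabla\boldsymbol{\varphi}:M(\boldsymbol{\mu})\nabla\boldsymbol{\varphi}^{\ast}\,\mathrm{d}x$ with $\int_{\Omega}\boldsymbol{\varphi}\cdot w\,\mathrm{d}x$ by integration by parts, using the Neumann condition $\nabla\boldsymbol{\varphi}^{\ast}\cdot\mathbf{n}=0$ from~\eqref{1-10}. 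Positive-definiteness of $M(\boldsymbol{\mu})$ yields the sign of the leftover square, and equality corresponds to $\boldsymbol{\varphi}=\boldsymbol{\varphi}^{\ast}$ up to an additive constant.

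Plugging $\boldsymbol{\varphi}=-\tfrac{\delta\mathcal{E}[\boldsymbol{\mu}]}{\delta\boldsymbol{\mu}}$ and $w=\partial_{t}\boldsymbol{\mu}$ into this Young inequality (the zero mean of $\partial_{t}\boldsymbol{\mu}$ follows from the no-flux condition on~\eqref{1-1}), combining with the chain rule, and integrating in time yields
\begin{equation*}
\mathcal{E}[\boldsymbol{\mu}_{0}]-\mathcal{E}[\boldsymbol{\mu}(T)]\;\le\; \int_{0}^{T} \tfrac{1}{2}A^{\ast}(\boldsymbol{\mu},\partial_{t}\boldsymbol{\mu})+\tfrac{1}{2}A\Bigl(\boldsymbol{\mu},-\tfrac{\delta\mathcal{E}[\boldsymbol{\mu}]}{\delta\boldsymbol{\mu}}\Bigr)\,\mathrm{d}t,
\end{equation*}
valid for \emph{every} admissible curve. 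This is the reverse of~\eqref{EDI}, so \eqref{EDI} can only hold with equality, and the nonnegative Young defect must vanish for a.e.\ $t$; this is precisely $\partial_{t}\boldsymbol{\mu}=\nabla\cdot[M(\boldsymbol{\mu})\nabla\tfrac{\delta\mathcal{E}[\boldsymbol{\mu}]}{\delta\boldsymbol{\mu}}]$, i.e.~\eqref{1-1}. The converse is automatic: any solution of~\eqref{1-1} saturates Young at every time and hence saturates~\eqref{EDI}.

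The main obstacle I expect is the rigorous justification of the chain rule and of the integration-by-parts steps under low regularity of $\boldsymbol{\mu}$ and $M(\boldsymbol{\mu})$, together with well-posedness of the Poisson problem~\eqref{1-10}. At the formal level of this lemma I would assume $\boldsymbol{\mu}$ is smooth enough and $M(\boldsymbol{\mu})$ uniformly positive-definite so that the Poisson solve is standard and all manipulations are valid; a genuine low-regularity treatment would lean on the AGS machinery invoked later in this section.
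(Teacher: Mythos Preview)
Your approach is essentially the same as the paper's: combine the chain rule $-\tfrac{\mathrm{d}}{\mathrm{d}t}\mathcal{E}[\boldsymbol{\mu}]=\langle\partial_t\boldsymbol{\mu},-\tfrac{\delta\mathcal{E}}{\delta\boldsymbol{\mu}}\rangle$ with the Young inequality from the Legendre duality $A\leftrightarrow A^{\ast}$, note that equality in Young is precisely~\eqref{1-1}, and integrate in time. The paper states this in two lines; your proposal simply unpacks the Young step (via the nonnegative square $\tfrac{1}{2}A(\boldsymbol{\mu},\boldsymbol{\varphi}-\boldsymbol{\varphi}^{\ast})$) and flags the regularity caveats, which is fine but not a different route.
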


\begin{proof}[Proof of Lemma~\ref{EDI-lemma}]
The proof relies on the following Young's inequality coming from the definition of Legendre transform:
\begin{equation*}
    -\frac{\mathrm{d}}{\mathrm{d}t}\mathcal{E}[\boldsymbol{\mu}]\ud t = \bigg<\partial_{t}\boldsymbol{\mu}, -\frac{\delta \mathcal{E}[\boldsymbol{\mu}]}{\delta \boldsymbol{\mu}}\bigg> \le \frac{1}{2}A^{\ast}\big(\boldsymbol{\mu}, \partial_{t}\boldsymbol{\mu}\big) + \frac{1}{2}A\bigg(\boldsymbol{\mu}, -\frac{\delta \mathcal{E}[\boldsymbol{\mu}]}{\delta \boldsymbol{\mu}}\bigg),
\end{equation*}
where the equality holds if and only if $\boldsymbol{\mu}$ satisfy~\eqref{1-1}. Therefore, one can obtain an equivalent characterization of~\eqref{1-1} by integrating the above inequality over $(0, T)$, which gives us~\eqref{EDI}.
\end{proof}

\subsection{Transport metric}
In this part, we construct the transport metric~\eqref{SSD} based on the form of energy dissipation in~\eqref{EDI}. In the context of the metric space with a distance $D_{\ast}(\boldsymbol{\mu}, \boldsymbol{\nu})$, one has the following energy inequality characterization of curves of maximal slope, i.e., gradient flows (see \cite[Remark 1.3.3]{Ambrosio2005}):
\begin{equation}\label{EDI-M}
    \mathcal{E}[\boldsymbol{\mu}(T)] + \int_{0}^{T} \frac{1}{2} |\dot{\boldsymbol{\mu}}(t)|^{2} + \frac{1}{2}\big|\partial\mathcal{E}[\boldsymbol{\mu}(t)]\big|^{2}\,\textnormal{d}t \le \mathcal{E}[\boldsymbol{\mu}_{0}],
\end{equation}
where $|\dot{\boldsymbol{\mu}}(t)|$ and $\big|\partial\mathcal{E}[\boldsymbol{\mu}(t)]\big|$ are metric derivative and metric slope, respectively, defined by
\begin{equation*}
    |\dot{\boldsymbol{\mu}}(t)| = \lim_{\varepsilon \to 0} \frac{D_{\ast}(\boldsymbol{\mu}(t + \varepsilon), \boldsymbol{\mu}(t))}{\varepsilon},\quad \big|\partial\mathcal{E}[\boldsymbol{\mu}]\big| = \limsup_{\boldsymbol{\nu} \to \boldsymbol{\mu}} \frac{\big(\mathcal{E}[\boldsymbol{\mu}] - \mathcal{E}[\boldsymbol{\nu}]\big)_{+}}{D_{\ast}(\boldsymbol{\mu}, \boldsymbol{\nu})},
\end{equation*}
where $(\alpha)_{+} = \max\{0, \alpha\}$. For discussion on these concepts, please refers to \cite[Chapter 1]{Ambrosio2005}.

Now, by comparing~\eqref{EDI-M} to the EDI form~\eqref{EDI}, we can see that $A$ and $A^{\ast}$ in~\eqref{EDI} correspond to the form of metric slope and metric derivative, respectively. Hence, one can expect that
\begin{equation*}
    A^{\ast}\big(\boldsymbol{\mu}(t), \partial_{t}\boldsymbol{\mu}(t)\big)^{\frac{1}{2}} = \lim_{\varepsilon \to 0} \frac{D_{\ast}(\boldsymbol{\mu}(t + \varepsilon), \boldsymbol{\mu}(t))}{\varepsilon}.
\end{equation*}
By integrating the metric derivative $A^{\ast}$ over curves $\boldsymbol{\omega}$, we can formulate the transport metric by using the following least action principal:
\begin{equation*}
    D_{\ast}(\boldsymbol{\mu}, \boldsymbol{\nu}) = \min_{\boldsymbol{\omega}} \bigg\{\int_{0}^{1}A^{\ast}\big(\boldsymbol{\omega}(t), \partial_{t}\boldsymbol{\omega}(t)\big)\,\mathrm{d}t,\quad \boldsymbol{\omega}(0) = \boldsymbol{\mu},~\boldsymbol{\omega}(1) = \boldsymbol{\nu}\bigg\}^{\frac{1}{2}}.
\end{equation*}
Let us prove that $D_{\ast}(\boldsymbol{\mu}, \boldsymbol{\nu})$ is exactly the transport metric in~\eqref{SSD}.

\begin{proposition}\label{equi-dist} Given $\boldsymbol{\mu}, \boldsymbol{\nu}$, we have $D(\boldsymbol{\mu}, \boldsymbol{\nu}) = D_{\ast}(\boldsymbol{\mu}, \boldsymbol{\nu})$.
\end{proposition}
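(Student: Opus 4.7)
The plan is to identify the two distances by performing the inner minimization over $\textbf{u}$ in \eqref{SSD} pointwise in $t$ for a fixed curve $\boldsymbol{\omega}$, and showing that this inner minimum equals exactly the integrand $A^{\ast}(\boldsymbol{\omega}(t), \partial_{t}\boldsymbol{\omega}(t))$ appearing in $D_{\ast}$. Once that reduction is in hand, the outer minimization over curves $\boldsymbol{\omega}$ aligns $D$ with $D_{\ast}$ term by term, giving both inequalities simultaneously.

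To realize this, I would first fix a curve $\boldsymbol{\omega}$ joining $\boldsymbol{\mu}$ to $\boldsymbol{\nu}$ and, at each time $t$, introduce the Poisson potential $\boldsymbol{\varphi}^{\ast}(t)$ from \eqref{1-10} with right-hand side $\partial_{t}\boldsymbol{\omega}(t)$. I would then produce the canonical competitor $\textbf{u}_{\ast} := -\nabla\boldsymbol{\varphi}^{\ast}$: by construction $\textbf{u}_{\ast}$ satisfies the continuity constraint in \eqref{SSD}, and substitution gives $\int_{\Omega}\textbf{u}_{\ast} : M(\boldsymbol{\omega})\textbf{u}_{\ast}\,\mathrm{d}x = A^{\ast}(\boldsymbol{\omega}, \partial_{t}\boldsymbol{\omega})$, so the inner minimum is at most $A^{\ast}$.

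For the matching lower bound, I would take an arbitrary admissible $\textbf{u}$, test both its continuity constraint and \eqref{1-10} against $\boldsymbol{\varphi}^{\ast}$, and integrate by parts using the no-flux boundary condition. This yields
\[
\int_{\Omega}\nabla\boldsymbol{\varphi}^{\ast} : M(\boldsymbol{\omega})\textbf{u}\,\mathrm{d}x = \int_{\Omega}\boldsymbol{\varphi}^{\ast}\cdot\partial_{t}\boldsymbol{\omega}\,\mathrm{d}x = -\int_{\Omega}\nabla\boldsymbol{\varphi}^{\ast} : M(\boldsymbol{\omega})\nabla\boldsymbol{\varphi}^{\ast}\,\mathrm{d}x,
\]
which is exactly the $M(\boldsymbol{\omega})$-orthogonality of the residue $\textbf{u} - \textbf{u}_{\ast}$ to $\nabla\boldsymbol{\varphi}^{\ast} = -\textbf{u}_{\ast}$. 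Expanding the weighted squared norm $\int \textbf{u} : M(\boldsymbol{\omega})\textbf{u}\,\mathrm{d}x$ around $\textbf{u}_{\ast}$ and applying this orthogonality gives $\int \textbf{u} : M(\boldsymbol{\omega})\textbf{u}\,\mathrm{d}x \ge A^{\ast}(\boldsymbol{\omega}, \partial_{t}\boldsymbol{\omega})$, with equality iff $\textbf{u} = \textbf{u}_{\ast}$. Integrating over $t \in (0,1)$ and taking the outer infimum over $\boldsymbol{\omega}$ then closes the argument.

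The main obstacle I anticipate is regularity rather than algebra: to invoke \eqref{1-10} and the integration by parts above along an arbitrary admissible curve, one needs enough control on $M(\boldsymbol{\omega})$ (typically uniform positive-definiteness, or a careful degenerate extension) together with a measurable-in-$t$ selection of $\boldsymbol{\varphi}^{\ast}$, so that the pointwise-in-$t$ minimization can legitimately be exchanged with the time integration. Modulo these standard but delicate technical points, the orthogonality argument reduces $D$ to $D_{\ast}$ cleanly.
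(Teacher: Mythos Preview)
Your proposal is correct and follows essentially the same route as the paper's proof (attributed there to Otto and Westdickenberg): both arguments fix a curve $\boldsymbol{\omega}$, produce the gradient competitor $-\nabla\boldsymbol{\varphi}^{\ast}$ via the Poisson problem~\eqref{1-10} to get one inequality, and then use the integration-by-parts identity $\int_{\Omega}\nabla\boldsymbol{\varphi}^{\ast} : M(\boldsymbol{\omega})(\textbf{u}+\nabla\boldsymbol{\varphi}^{\ast})\,\mathrm{d}x = 0$ for the other. The only cosmetic difference is that the paper concludes from this identity via Cauchy--Schwarz in the $M(\boldsymbol{\omega})$-weighted inner product, whereas you expand the weighted norm and use Pythagoras; these are equivalent formulations of the same orthogonal-projection fact.
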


\begin{proof}
The following proof is based on Otto and Westdickenberg \cite{Otto2005}. By writing down the form of metric derivative, we have
\begin{equation}\label{acc}
    \begin{aligned}
        \int_{0}^{1} A^{\ast}\big(\boldsymbol{\omega}(t), \partial_{t}\boldsymbol{\omega}(t)\big)\,\mathrm{d}t &= \int_{0}^{1}\int_{\Omega} \,\nabla \boldsymbol{\varphi} : M(\boldsymbol{\omega}) \nabla \boldsymbol{\varphi}\,\mathrm{d}x\mathrm{d}t\\
        &\quad \text{ where } \partial_{t}\boldsymbol{\omega} - \nabla \cdot [M(\boldsymbol{\omega}) \nabla \boldsymbol{\varphi}] = 0 \text{ in } \Omega \times (0, 1).
    \end{aligned}
\end{equation}

\noindent
i) Because the minimal action $D_{\ast}(\boldsymbol{\mu}, \boldsymbol{\nu})$ is the special case of the Benamou-Brenier dynamical formulation $D(\boldsymbol{\mu}, \boldsymbol{\nu})$ when $\textbf{u} = -\nabla \boldsymbol{\varphi}$, we can show that the minimal action provide a upper bound for the Benamou-Brenier dynamical formulation, i.e., $D(\boldsymbol{\mu}, \boldsymbol{\nu}) \le D_{\ast}(\boldsymbol{\mu}, \boldsymbol{\nu})$.
\vspace{6pt}

\noindent
ii) The converse can be shown as follows. For any given $\boldsymbol{\omega}$ and any admissible pair $(\boldsymbol{\omega}, \textbf{u})$ that satisfies the continuity equation constraint, we can always find a potential $\boldsymbol{\varphi}$ that solves the following elliptic equation for any time $t \in (0, 1)$:
\begin{equation*}
     -\nabla \cdot \big[M(\boldsymbol{\omega}(t))\nabla \boldsymbol{\varphi}(t)\big] = \nabla \cdot \big[M(\boldsymbol{\omega}(t))\textbf{u}(t)\big] \text{ in } \Omega.
\end{equation*}
By using an integration by parts and the Neumann boundary condition~\eqref{1-10} on $\boldsymbol{\varphi}$, we have
\begin{equation*}
    \int_{0}^{1}\int_{\Omega} \,\big[M(\boldsymbol{\omega})\textbf{u} + M(\boldsymbol{\omega})\nabla \boldsymbol{\varphi}\big] : \nabla \boldsymbol{\varphi}\,\mathrm{d}x\mathrm{d}t = 0.
\end{equation*}
By the Cauchy–Schwarz inequality (see, for example, \cite[Lemma 3.3.1]{Boffi2013}), we have
\begin{equation*}
    \begin{aligned}
        \int_{\Omega}\,\nabla \boldsymbol{\varphi} : M(\boldsymbol{\omega})\nabla \boldsymbol{\varphi} &\,\mathrm{d}x = - \int_{\Omega} \,\nabla \boldsymbol{\varphi} : M(\boldsymbol{\omega})\textbf{u}\,\mathrm{d}x\\
        &\le \Bigg(\int_{\Omega}\, \textbf{u} : M(\boldsymbol{\omega})\textbf{u} \,\mathrm{d}x\Bigg)^{\frac{1}{2}} \cdot \Bigg(\int_{\Omega}\, \nabla \boldsymbol{\varphi} : M(\boldsymbol{\omega})\nabla \boldsymbol{\varphi}\,\mathrm{d}x\Bigg)^{\frac{1}{2}},
    \end{aligned}
\end{equation*}
which implies that
\begin{equation*}
    \int_{0}^{1}\int_{\Omega}\, \nabla \boldsymbol{\varphi} : M(\boldsymbol{\omega})\nabla \boldsymbol{\varphi}\,\mathrm{d}x\mathrm{d}t \le \int_{0}^{1}\int_{\Omega} \,\textbf{u} : M(\boldsymbol{\omega})\textbf{u} \,\mathrm{d}x\mathrm{d}t,
\end{equation*}
and therefore $D_{\ast}(\boldsymbol{\mu}, \boldsymbol{\nu}) \le D(\boldsymbol{\mu}, \boldsymbol{\nu})$.
\end{proof}

\subsection{Minimizing movements}
By considering the EDI form~\eqref{EDI} and the construction of distances (Theorem~\ref{equi-dist}), we can interpret~\eqref{1-1} as the gradient flow with respect to the distance $D(\boldsymbol{\mu}, \boldsymbol{\nu})$. Hence, the minimizing movement scheme~\eqref{JKOMM}:
\begin{equation*}
    \boldsymbol{\mu}^{k + 1} = \arg\min_{\boldsymbol{\mu}} \tau \mathcal{E}[\boldsymbol{\mu}] + \frac{1}{2}D(\boldsymbol{\mu}, \boldsymbol{\mu}^{k})^{2}
\end{equation*}
is a consistent variational formulation to compute~\eqref{1-1}.

In the rest of this part, we introduce a change-of-variable technique in the transport distance \cite{Benamou2000, PDFB2024, Savare2016} to obtain an equivalent convex optimization problem. Consider the so-called “momentum":
\begin{equation*}
    \textbf{m} = M(\boldsymbol{\omega})\textbf{u}.
\end{equation*}
By taking this expression of $\textbf{m}$ into~\eqref{SSD}, we have the following reformulation of the distance.
\begin{lemma}[\cite{Savare2016}] Let $D(\boldsymbol{\mu}, \boldsymbol{\nu})$ be the distance defined in~\eqref{SSD}, we have
\begin{equation}\label{SSDdd}
    \begin{aligned}
        \frac{1}{2}D(\boldsymbol{\mu}, \boldsymbol{\nu})^{2} = \min_{\boldsymbol{\omega}, \textnormal{\textbf{m}}} \Bigg\{\int_{0}^{1}\int_{\Omega}\, f\big(M(\boldsymbol{\omega}), \textnormal{\textbf{m}}\big)\,\mathrm{d}x \mathrm{d}t &: \boldsymbol{\omega}(0) = \boldsymbol{\mu} \text{ and } \boldsymbol{\omega}(1) = \boldsymbol{\nu},\\
        &\partial_{t}\boldsymbol{\omega} + \nabla\cdot \textnormal{\textbf{m}} = 0 \text{ in } \Omega \times (0, 1)\Bigg\}.
    \end{aligned}
\end{equation}
where $\textnormal{\textbf{m}} = [m_{\alpha, \beta}]_{\alpha, \beta = 1}^{n, d}$ is minimized among matrix-valued fields with vanishing normal fluxes on the boundary such that $[m_{\alpha, \beta}]_{\beta = 1}^{d} \cdot \textnormal{\textbf{n}} = 0$ on $\partial \Omega$ for all $\alpha$, and $f$ is an action function defined by
\begin{equation*}
    f(M, \textnormal{\textbf{m}}) = 
    \begin{cases}
        \frac{1}{2}\textnormal{\textbf{m}} : M^{\dagger}\textnormal{\textbf{m}} &\text{if $\textnormal{\textbf{m}} = M\textnormal{\textbf{u}}$ for some $\textnormal{\textbf{u}}$}\\
        &\text{and $0 \preceq_{\mathcal{S}_{+}} M$,}\\
        \infty &\text{otherwise.}
    \end{cases}
\end{equation*}
and $M^{\dagger}$ denotes the pseudo inverse of $M$.
\end{lemma}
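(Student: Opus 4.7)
The plan is to show that the change of variable $\textbf{m} = M(\boldsymbol{\omega})\textbf{u}$ establishes a one-to-one correspondence (modulo the kernel of $M(\boldsymbol{\omega})$) between admissible pairs $(\boldsymbol{\omega},\textbf{u})$ for the definition~\eqref{SSD} and admissible pairs $(\boldsymbol{\omega},\textnormal{\textbf{m}})$ for~\eqref{SSDdd} under which the two action integrals agree up to the factor $\tfrac{1}{2}$. The constraints transform trivially: the continuity equation $\partial_t\boldsymbol{\omega}+\nabla\cdot[M(\boldsymbol{\omega})\textbf{u}]=0$ becomes $\partial_t\boldsymbol{\omega}+\nabla\cdot\textnormal{\textbf{m}}=0$, the endpoint conditions $\boldsymbol{\omega}(0)=\boldsymbol{\mu},\boldsymbol{\omega}(1)=\boldsymbol{\nu}$ are preserved, and the no-flux boundary conditions on $M(\boldsymbol{\omega})\textbf{u}$ translate into the vanishing normal flux conditions on $\textnormal{\textbf{m}}$. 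Hence the proof reduces to matching the action densities pointwise.

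For the forward direction ($D_{\text{new}}\le \tfrac{1}{2}D^{2}$), I would fix an admissible pair $(\boldsymbol{\omega},\textbf{u})$ for \eqref{SSD} and set $\textnormal{\textbf{m}}:=M(\boldsymbol{\omega})\textbf{u}$. Then $\textnormal{\textbf{m}}$ lies in the range of $M(\boldsymbol{\omega})$ a.e., so the finite branch of $f$ applies; using the symmetry of $M(\boldsymbol{\omega})$ and the defining identity $M M^{\dagger} M = M$ of the Moore–Penrose pseudo-inverse gives
\begin{equation*}
    \textnormal{\textbf{m}}:M(\boldsymbol{\omega})^{\dagger}\textnormal{\textbf{m}} = \bigl(M(\boldsymbol{\omega})\textbf{u}\bigr):M(\boldsymbol{\omega})^{\dagger}M(\boldsymbol{\omega})\textbf{u} = \textbf{u}:M(\boldsymbol{\omega})\textbf{u}.
\end{equation*}
Integrating in $x,t$ and multiplying by $\tfrac{1}{2}$ yields the desired bound.

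For the reverse direction, I would take any admissible $(\boldsymbol{\omega},\textnormal{\textbf{m}})$ with finite action in~\eqref{SSDdd}. Finiteness forces $\textnormal{\textbf{m}}\in\mathrm{Range}\,M(\boldsymbol{\omega})$ a.e. (otherwise $f=\infty$ on a set of positive measure), and I would define $\textbf{u}:=M(\boldsymbol{\omega})^{\dagger}\textnormal{\textbf{m}}$, the minimum-norm solution of $M(\boldsymbol{\omega})\textbf{u}=\textnormal{\textbf{m}}$. The same algebraic identity shows $\textbf{u}:M(\boldsymbol{\omega})\textbf{u}=\textnormal{\textbf{m}}:M(\boldsymbol{\omega})^{\dagger}\textnormal{\textbf{m}}$, and the continuity equation becomes $\partial_t\boldsymbol{\omega}+\nabla\cdot[M(\boldsymbol{\omega})\textbf{u}]=0$. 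Thus $(\boldsymbol{\omega},\textbf{u})$ is admissible in~\eqref{SSD} with the matched action, which proves the opposite inequality and hence equality.

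The main obstacle is the rigorous handling of the pseudo-inverse when $M(\boldsymbol{\omega})$ can be rank-deficient on a set of positive measure: one must argue by a measurable-selection / regularization procedure that $\textbf{u} = M(\boldsymbol{\omega})^{\dagger}\textnormal{\textbf{m}}$ is measurable with the correct integrability, and that the implication ``finite action $\Rightarrow \textnormal{\textbf{m}}\in\mathrm{Range}\,M(\boldsymbol{\omega})$ a.e.'' holds in the intended distributional sense. Once these measurability issues are settled, the proof is driven entirely by the linear-algebraic identity $M M^{\dagger}M = M$, and the result of \cite{Savare2016} can be invoked to cover the fully degenerate limit.
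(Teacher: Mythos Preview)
The paper does not supply its own proof of this lemma: it is stated with attribution to \cite{Savare2016}, and the only argument given is the one-line motivation ``Consider the so-called `momentum' $\textbf{m}=M(\boldsymbol{\omega})\textbf{u}$. By taking this expression of $\textbf{m}$ into~\eqref{SSD}, we have the following reformulation of the distance.'' Your proposal is precisely a rigorous expansion of this change-of-variable idea, so it is fully aligned with what the paper indicates; there is nothing further to compare.
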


Finally, by incorporating~\eqref{SSDdd} into the minimizing movement~\eqref{JKOMM}, it suffices to solve the following convex optimization problem
\begin{equation}\label{JKO}
    \begin{aligned}
        &\min_{\boldsymbol{\mu}, \boldsymbol{\omega}, \textbf{m}} \tau \mathcal{E}[\boldsymbol{\mu}] + \int_{0}^{1}\int_{\Omega} \,f\big(M(\boldsymbol{\omega}), \textbf{m}\big)\,\mathrm{d}x \mathrm{d}t\\
        &\text{ s.t. } \partial_{t}\boldsymbol{\omega} + \nabla \cdot \textbf{m} = 0 \text{ in } \Omega \times (0, 1),\quad\boldsymbol{\omega}(0) = \boldsymbol{\mu}^{k},~\boldsymbol{\omega}(1) = \boldsymbol{\mu} \text{ in } \Omega,
    \end{aligned}
\end{equation}
where $\textbf{m} = [m_{\alpha, \beta}]_{\alpha, \beta = 1}^{n, d}$ subjects to the vanishing normal fluxes on the boundary.

\section{Conforming approximation on staggered grids}\label{sec:2}
In this section, we construct the staggered grid approximation of scheme~\eqref{JKO}, and then discuss its structure-preserving properties. The discrete saddle point form, which is the final form of the numerical scheme, is summarized in the Section~\ref{sec:summary}.

\subsection{Single-step approximation}
We first introduce a single-step approximation to reduce the computation cost which has been considered in \cite{Li2019, Carrillo2023, PDFB2024}. Assume that $\textbf{m}$ is constant over inner-time $(0, 1)$, we approximate $\boldsymbol{\mu}^{k + 1}$ using the optimal $\boldsymbol{\mu}$ that minimizes,
\begin{equation}\label{single}
    \begin{aligned}
        &\min_{\boldsymbol{\mu}, \textbf{m}} \tau \mathcal{E}[\boldsymbol{\mu}] + \int_{\Omega}\, f\Big(M\big(\tfrac{1}{2}(\boldsymbol{\mu} + \boldsymbol{\mu}^{k})\big), \textbf{m}\Big)\,\mathrm{d}x\\
        &\text{ s.t. } \boldsymbol{\mu} - \boldsymbol{\mu}^{k} + \nabla \cdot \textbf{m} = 0 \text{ in } \Omega,\quad \boldsymbol{b}_{0} \le \mathcal{M}\boldsymbol{\mu} \le \boldsymbol{b}_{1} \text{ in } \Omega,
    \end{aligned}
\end{equation}
where $\mathcal{M}$ is a matrix of the size $\mathbb{R}^{p \times n}$ for given $p$, and $\boldsymbol{b}_{0} = [b_{0, \beta}]_{\beta = 1}^{p}, \boldsymbol{b}_{1} = [b_{1, \beta}]_{\beta = 1}^{p}$ are global bounds, which are set by specific form of the model to keep $M(\boldsymbol{\mu})$ positive definite.

\begin{remark}
This box constraint is necessary to ensure bounded preservation at the discrete level (see \cite[Equation (11)]{PDFB2024}).
\end{remark}

\begin{remark}
It has been shown that single-step approximations in the dynamical transport distance $D(\boldsymbol{\mu}, \boldsymbol{\nu})$ does not influence the resolution order of JKO schemes \cite{Li2019, Carrillo2023, Li2023}.
\end{remark}

\begin{remark}\label{box5}
Compared to the scalar case in \cite{PDFB2024}, the bound of the solution may be coupled for~\eqref{1-1}. For example, consider the following mobility matrix from \cite{Carrillo2022} (where $\sigma$ is a positive constant which is referred to as the saturation level):
\begin{equation*}
    M(\boldsymbol{\mu}) = \left[\begin{array}{cc}
        \mu_{1}(\sigma - (\mu_{1} + \mu_{2})) & 0 \\
        0 & \mu_{2}(\sigma - (\mu_{1} + \mu_{2}))
    \end{array}\right]
\end{equation*}
where the box constraint is given by $\mu_{0} \ge 0,~ \mu_{1} \ge 0, ~ \mu_{1} + \mu_{2} \le \sigma$. Hence, in the numerical scheme~\eqref{single} the corresponding matrix $\mathcal{M}$ and bounds are given by
\begin{equation*}
    \mathcal{M} =
    \left[\begin{array}{cc}
        1 & 0\\
        0 & 1\\
        1 & 1\\
    \end{array}\right],\quad \boldsymbol{b}_{0} =
    \left[\begin{array}{c}
        0\\
        0\\
        -\infty
    \end{array}\right],\quad \boldsymbol{b}_{1} =
    \left[\begin{array}{c}
        \infty\\
        \infty\\
        \sigma
    \end{array}\right].
\end{equation*}
\end{remark}

\subsection{Saddle point form}
The main difficulty in solving the variational problem~\eqref{single} is the non-smoothness of the action function $f$. To remedy this, we consider proximal gradient methods for non-smooth optimization problems (see, for example, \cite{Chambolle2011, Pock2011, Chambolle2016, Malitsky2018}). In this subsection, we derive a saddle point form of~\eqref{single} which is critical in non-smooth optimization methods.

Closely following the discussion on the scalar action functions \cite{Benamou2000, Papadakis2013, PDFB2024}, we show the following theorem, which states that the action function $f$ is a support function of a convex set, or the Legendre transform of $f$ is the indicator function of a convex set.
\begin{theorem}\label{dual_lemma} Let $M \in \mathbb{R}^{n \times n}$ be a symmetric matrix, and $\textnormal{\textbf{m}} \in \mathbb{R}^{n \times d}$, one has
\begin{equation*}
    \begin{aligned}
        f(M, \textnormal{\textbf{m}}) = \max_{(Q, \boldsymbol{q}) \in K} M : Q + \textnormal{\textbf{m}} : \boldsymbol{q},
    \end{aligned}
\end{equation*}
where $K$ is a closed convex set defined by
\begin{equation}\label{kh}
    K = \bigg\{(Q, \boldsymbol{q}) \in \mathbb{R}^{n \times n}_{\textnormal{sym}} \times \mathbb{R}^{n \times d} : Q + \frac{\boldsymbol{q} \boldsymbol{q}^{T}}{2} \preceq_{\mathcal{S}_{+}} 0\bigg\},
\end{equation}
where $\mathbb{R}^{n \times n}_{\textnormal{sym}}$ is the space of symmetrical $n \times n$ matrices, and $\preceq_{\mathcal{S}_{+}}$ is the order relation with respect to the cone of positive semi-definite matrix $\mathcal{S}_{+}$. 
\end{theorem}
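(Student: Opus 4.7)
The plan is to compute the right-hand side as an optimization problem, parameterize the feasible set $K$ in a convenient way, and show by direct calculation that the maximum coincides with $f(M,\textnormal{\textbf{m}})$ in both the admissible case (where $f$ is finite) and the non-admissible cases (where $f = \infty$). The governing idea is that the constraint $Q + \tfrac{1}{2}\boldsymbol{q}\boldsymbol{q}^{T}\preceq_{\mathcal{S}_{+}}0$ may be written as $Q = -\tfrac{1}{2}\boldsymbol{q}\boldsymbol{q}^{T} - S$ for some $S \succeq_{\mathcal{S}_{+}} 0$, which reduces the problem to a finite-dimensional quadratic concave maximization in $\boldsymbol{q}$ once $S$ is eliminated.

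First I would perform the reduction. Substituting the parameterization and using $M:(\boldsymbol{q}\boldsymbol{q}^{T}) = \sum_{\beta=1}^{d} q_{\beta}^{T} M q_{\beta}$ (where $q_{\beta}$ is the $\beta$-th column of $\boldsymbol{q}$), the objective becomes
\begin{equation*}
    M:Q + \textnormal{\textbf{m}}:\boldsymbol{q} \;=\; -\,M:S \;-\; \tfrac{1}{2}\sum_{\beta=1}^{d} q_{\beta}^{T} M q_{\beta} \;+\; \sum_{\beta=1}^{d} m_{\beta}^{T} q_{\beta}.
\end{equation*}
In the admissible case where $M\succeq_{\mathcal{S}_{+}} 0$, the term $-M:S$ is nonpositive for all $S\succeq 0$, so the maximum is attained by $S=0$. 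The remaining problem decouples over columns; for each $\beta$, the first-order condition $M q_{\beta} = m_{\beta}$ is solvable precisely when $m_{\beta}$ lies in the range of $M$, which is exactly the admissibility condition $\textnormal{\textbf{m}} = M\textnormal{\textbf{u}}$. Taking $q_{\beta} = M^{\dagger} m_{\beta}$ and using the identities $M M^{\dagger} M = M$ and $M^{\dagger} M M^{\dagger} = M^{\dagger}$, each column contributes $\tfrac{1}{2} m_{\beta}^{T} M^{\dagger} m_{\beta}$, so the total equals $\tfrac{1}{2}\textnormal{\textbf{m}}: M^{\dagger}\textnormal{\textbf{m}} = f(M, \textnormal{\textbf{m}})$.

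Next I would handle the two non-admissible cases to match $f = \infty$. If $M\not\succeq_{\mathcal{S}_{+}}0$, pick a unit eigenvector $v$ with eigenvalue $\lambda<0$ and take $\boldsymbol{q} = 0$, $S = t\, v v^{T}$ with $t\to\infty$; then $-M:S = -t\lambda \to +\infty$ while $(Q,\boldsymbol{q})\in K$. If instead $M\succeq_{\mathcal{S}_{+}}0$ but some column $m_{\beta}$ lies outside the range of $M$, decompose $m_{\beta} = m_{\beta}^{\parallel} + m_{\beta}^{\perp}$ orthogonally with $m_{\beta}^{\perp}\in\ker(M)\setminus\{0\}$ and set $q_{\beta} = t\,m_{\beta}^{\perp}$ for $t\to\infty$, with all other columns zero and $Q = -\tfrac{1}{2}\boldsymbol{q}\boldsymbol{q}^{T}$. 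Then $q_{\beta}^{T} M q_{\beta} = 0$ while $m_{\beta}^{T} q_{\beta} = t\,\|m_{\beta}^{\perp}\|^{2}\to+\infty$, and the constraint in $K$ holds with equality. Together these three cases establish the claimed identity.

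The main obstacle I anticipate is notational bookkeeping around the pseudoinverse in the admissible case when $M$ is singular: one must verify that the choice $q_{\beta} = M^{\dagger} m_{\beta}$ is indeed a maximizer (not merely a critical point) of the concave quadratic $-\tfrac{1}{2} q^{T} M q + m_{\beta}^{T} q$, and that the resulting value is intrinsic (independent of the particular preimage of $m_{\beta}$ under $M$). Both follow from concavity and the range conditions on $M^{\dagger}$, so this is a bookkeeping issue rather than a conceptual one; the rest of the argument is a standard Fenchel-duality style computation analogous to the scalar case treated in \cite{Benamou2000, PDFB2024}.
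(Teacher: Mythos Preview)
Your argument is correct and cleaner than the paper's. The paper proceeds in two steps: it first proves the inequality $f_{\text{supp}}(M,\textbf{m}) \le f(M,\textbf{m})$ by diagonalizing $M = U^{T}\Lambda U$, invoking the known scalar support-function identity for each eigendirection (the Benamou--Brenier formula for $f_{0}$), and then arguing that the matrix constraint $Q + \tfrac{1}{2}\boldsymbol{q}\boldsymbol{q}^{T}\preceq_{\mathcal{S}_{+}}0$ implies the diagonal constraints $U_{\alpha}(Q + \tfrac{1}{2}\boldsymbol{q}\boldsymbol{q}^{T})U_{\alpha}^{T}\le 0$, so that the maximization over $K$ is over a smaller set; it then constructs explicit maximizers and blow-up sequences (essentially the same as yours) to recover equality. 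Your parameterization $Q = -\tfrac{1}{2}\boldsymbol{q}\boldsymbol{q}^{T} - S$ with $S\succeq 0$ bypasses the diagonalization entirely and reduces the problem in one stroke to a column-wise concave quadratic, giving both inequalities at once in the admissible case. What you gain is a shorter, self-contained argument that does not rely on the scalar result as an external input; what the paper's route buys is a transparent connection to the $n=1$ case and an explicit identification of the optimal $(Q_{\ast},\boldsymbol{q}_{\ast}) = \bigl(-\tfrac{1}{2}M^{\dagger}\textbf{m}\textbf{m}^{T}M^{\dagger},\, M^{\dagger}\textbf{m}\bigr)$, which is exactly the pair your first-order condition produces.
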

We postpone the proof of this theorem to Section~\ref{sec:supp}. By using the support function form in Theorem~\ref{dual_lemma}, we have
\begin{equation*}
    \begin{aligned}
        \int_{\Omega} \,f\Big(M\big(\tfrac{1}{2}(\boldsymbol{\mu} + \boldsymbol{\mu}^{k})\big), \textbf{m}\Big) \,\mathrm{d}x = \sup \bigg\{\int_{\Omega}\, M\big(\tfrac{1}{2}(\boldsymbol{\mu} &+ \boldsymbol{\mu}^{k})\big) : Q + \textbf{m} : \boldsymbol{q} \,\mathrm{d}x:\\
        &(Q(x), \boldsymbol{q}(x)) \in K\quad \forall x \in \Omega\bigg\}.
    \end{aligned}
\end{equation*}
Hence, by using this support function characterization, we can write the variational problem~\eqref{single} in the following saddle point form,
\begin{equation}\label{obj_phi}
    \begin{aligned}
        &\min_{\boldsymbol{\mu}, \textbf{m}}\max_{Q, \boldsymbol{q}} \tau \mathcal{E}[\boldsymbol{\mu}] + \int_{\Omega}\, M\big(\tfrac{1}{2}\big(\boldsymbol{\mu} + \boldsymbol{\mu}^{k}\big)\big) : Q + \textbf{m} : \boldsymbol{q}\,\mathrm{d}x,\\
        &\text{ s.t. } \boldsymbol{\mu} - \boldsymbol{\mu}^{k} + \nabla \cdot \textbf{m} = 0,\quad \boldsymbol{b}_{0} \le \mathcal{M}\boldsymbol{\mu} \le \boldsymbol{b}_{1},\quad Q + \frac{\boldsymbol{q} \boldsymbol{q}^{T}}{2} \preceq_{\mathcal{S}_{+}} 0.
    \end{aligned}
\end{equation}
where $\textbf{m} = [m_{\alpha, \beta}]_{\alpha, \beta = 1}^{n, d}$ subjects to the vanishing normal fluxes on the boundary.

\subsection{Staggered grid approximation}
In this subsection, we construct a staggered grid approximation of~\eqref{single} and proceed to derive the discrete min-max saddle point form corresponding to~\eqref{obj_phi}. This construction is a multi-species version of the staggered grid approximation in our previous work \cite{PDFB2024}.

We cover the domain $\Omega$ with non-overlapping uniform boxes $\Omega_{\textbf{i}}$ defined by
\begin{equation*}
    \Omega_{\textbf{i}} = \Big\{x : \big|x - x_{\textbf{i}}\big|_{\infty} \le \tfrac{1}{2}h \Big\},
\end{equation*}
where $h$ is the grid spacing and $x_{\textbf{i}}$ is the center of the box $\Omega_{\textbf{i}}$. To write the scheme in a finite-difference stencil, we adopt a global indices to location of the cube $\Omega_{\textbf{i}}$:
\begin{equation*}
    \mathfrak{T} = \Big\{\textbf{i} = \big(i_{1},..., i_{d}\big)\Big\}.
\end{equation*}
Also, the boundary of each box $\Omega_{\textbf{i}}$ can be partitioned into $2d$-rectangles $\bigcup_{q = 1}^{d}\Gamma_{\textbf{i} \pm \frac{1}{2}e_{q}}^{q}$ of the dimension $d - 1$,
where $\Gamma_{\textbf{i} \pm \frac{1}{2}e_{q}}^{q}$ corresponds to two bounding interfaces orthogonal to the standard basis vector $e_{q}$ pointing in the $q$-th direction. We use the following indices to locate $\Gamma_{\textbf{i} \pm \frac{1}{2}e_{q}}^{q}$:
\begin{equation*}
    \mathfrak{J} = (\mathfrak{J}^{q})_{q = 1}^{d} \text{ where } \mathfrak{J}^{q} = \Big\{\textbf{i} = \big(i_{1},..., i_{q} \pm \tfrac{1}{2},... i_{d}\big)\Big\}.
\end{equation*}

Now, we let $\mathfrak{T}(\mathbb{R})$ and $\mathfrak{J}(\mathbb{R})$ be the discrete scalar field and flux field such that
\begin{equation*}
    \mathfrak{T}(\mathbb{R}) = \Big\{\sigma : \mathfrak{T} \to \mathbb{R}\Big\},\quad \mathfrak{J}(\mathbb{R}) = \Big\{\boldsymbol{\sigma} = (\sigma^{1}, ..., \sigma^{d}) : \quad \sigma^{q} : \mathfrak{J}^{q} \to \mathbb{R},\quad \forall 1 \le q \le d\Big\}.
\end{equation*}
where we use $\sigma_{\textbf{i}}$ or $\sigma_{\textbf{j}}^{q}$ to denote the value of the filed on the $\textbf{i}$-th box or $\textbf{j}$-th interfaces, respectively, for scalar filed $\sigma \in \mathfrak{T}(\mathbb{R})$ or vector field $\boldsymbol{\sigma} = (\sigma^{1}, ..., \sigma^{d}) \in \mathfrak{J}(\mathbb{R})$. We introduce the space of discrete momentum with vanishing normal fluxes of $\textbf{m}$ on the boundary:
\begin{equation*}
    \mathfrak{J}_{0}(\mathbb{R}) = \Big\{\boldsymbol{\sigma} = (\sigma^{1}, ..., \sigma^{d}) \in \mathfrak{J}(\mathbb{R}) :\quad \sigma_{\textbf{j}}^{q} = 0 \text{ when } \Gamma_{\textbf{j}}^{q} \subseteq \partial \Omega\quad \forall 1 \le q \le d\Big\},
\end{equation*}
which encodes the non-flux boundary condition into $\textbf{m}$ in the optimization.

\subsubsection{Construction}
In this part, we construct the approximation of the variational problem~\eqref{single} on the staggered grid $\mathfrak{T}(\mathbb{R})$ and $\mathfrak{J}(\mathbb{R})$ constructed above.

\vspace{6pt}
\noindent
\textit{1. Constraint}: We consider the following discrete admissible set in \eqref{single}:
\begin{equation}\label{addsmi}
    \begin{aligned}
        H = \Big\{(\boldsymbol{\mu}, \textbf{m}) \text{ such that } &(\mu_{\alpha}, \textbf{m}_{\alpha}) \in \mathfrak{T}(\mathbb{R}) \times \mathfrak{J}_{0}(\mathbb{R}) : \\
        &\mu_{\alpha} - \mu_{\alpha}^{k} + \mathcal{A}\textbf{m}_{\alpha} = 0, \quad \forall 1 \le \alpha \le n,\\
        &b_{0, \beta} \le \sum_{\alpha = 1}^{n} \mathcal{M}_{\beta \alpha}\mu_{\alpha, \textbf{i}} \le b_{1, \beta}, \quad \forall \textbf{i} \in \mathfrak{T} \text{ and } 1 \le \beta \le p\Big\}.
    \end{aligned}
\end{equation}
where $\mathcal{A}$ is the discrete divergence operator
\begin{equation*}
    \mathcal{A} : \mathfrak{J}(\mathbb{R}) \to \mathfrak{T}(\mathbb{R}),\quad (\mathcal{A}\boldsymbol{\sigma})_{\textbf{i}} = \frac{1}{h}\sum_{q = 1}^{d} \Big(\sigma_{\textbf{i} + \frac{1}{2}e_{q}}^{q} - \sigma_{\textbf{i} - \frac{1}{2}e_{q}}^{q}\Big) \text{ for } \boldsymbol{\sigma} = (\sigma^{1}, ..., \sigma^{d}).
\end{equation*}

\noindent
\textit{2. Action functional}: We consider a centered quadrature rule for the action functional
\begin{equation*}
    \int_{\Omega} \,f\Big(M\big(\tfrac{1}{2}(\boldsymbol{\mu} + \boldsymbol{\mu}^{k})\big), \textbf{m}\Big)\mathrm{d}x \approx \sum_{\textbf{i} \in \mathfrak{T}}f\Big(M\big(\tfrac{1}{2}(\boldsymbol{\mu}_{\textbf{i}} + \boldsymbol{\mu}^{k}_{\textbf{i}})\big), (\mathcal{I}\textbf{m})_{\textbf{i}}\Big)h^{d}
\end{equation*}
where $\mathcal{I}$ is the interpolation operator
\begin{equation*}
    \mathcal{I} : \mathfrak{J}(\mathbb{R}) \to \mathfrak{T}(\mathbb{R})^{d},\quad (\mathcal{I}\boldsymbol{\sigma})_{\textbf{i}} = \left(\frac{\sigma_{\textbf{i} + \frac{1}{2}e_{1}}^{1} + \sigma_{\textbf{i} - \frac{1}{2}e_{1}}^{1}}{2}, \cdots, \frac{\sigma_{\textbf{i} + \frac{1}{2}e_{d}}^{d} + \sigma_{\textbf{i} - \frac{1}{2}e_{d}}^{d}}{2}\right).
\end{equation*}
Here, by a sight abuse of notation, interpolation $\mathcal{I}\textbf{m}$ is a row-wise (specie-wise) interpolation when operated on a discrete matrix field in the space $\mathfrak{J}(\mathbb{R})^{n}$.

\vspace{6pt}
\noindent
\textit{3. Free energy functional}: For the free energy functional~\eqref{energy}, we consider a centered quadrature rule for the internal energy and the confinement potential, and a trapezoidal rule for the Dirichlet energy, which has been discussed in \cite{Carrillo2023}:
\begin{equation*}
    \mathcal{E}[\boldsymbol{\mu}] \approx \mathcal{E}_{h}[\boldsymbol{\mu}] = \sum_{\textbf{i}\in\mathfrak{T}} \Big(U(\boldsymbol{\mu}_{\textbf{i}}) + V(x_{\textbf{i}}) \cdot \boldsymbol{\mu}_{\textbf{i}} \Big) h^{d} + \sum_{q = 1}^{d}\sum_{\textbf{j} \in \mathfrak{J}^{q}} \frac{\varepsilon^{2}}{2} |\partial_{q, h} \boldsymbol{\mu}|^{2}_{\textbf{j}} h^{d},
\end{equation*}
where $\partial_{q, h}$ is the discrete partial derivative in the $q$-th direction defined by
\begin{equation*}
    \big(\partial_{q, h} \boldsymbol{\mu}\big)_{\textbf{j}} = \frac{\boldsymbol{\mu}_{\textbf{j} + \frac{1}{2}e_{q}} - \boldsymbol{\mu}_{\textbf{j} - \frac{1}{2}e_{q}}}{h}.
\end{equation*}
For the discrete Dirichlet energy $\sum_{q = 1}^{d}\sum_{\textbf{j} \in \mathfrak{J}^{q}} \frac{\varepsilon^{2}}{2} |\partial_{q, h} \boldsymbol{\mu}|^{2}_{\textbf{j}}$, its gradient corresponds to a centered approximation of the Laplacian $-\varepsilon^{2}\Delta_{h}\boldsymbol{\mu}$ with a Neumann boundary condition (see \cite{Carrillo2023}).

\subsection{Summary}\label{sec:summary}
In summary, we solve the following optimization in each time step
\begin{equation}\label{dsingle}
    \min_{\boldsymbol{\mu}, \textbf{m}} \tau \mathcal{E}_{h}[\boldsymbol{\mu}] + \sum_{\textbf{i} \in \mathfrak{T}}f\Big(M\big(\tfrac{1}{2}(\boldsymbol{\mu}_{\textbf{i}} + \boldsymbol{\mu}^{k}_{\textbf{i}})\big), (\mathcal{I}\textbf{m})_{\textbf{i}}\Big)h^{d} + \iota_{H}(\boldsymbol{\mu}, \textbf{m}),
\end{equation}
where $\iota_{A}(\cdot)$ is the indicator function of a set $A$ such that
\begin{equation*}
    \iota_{A}(\textbf{x}) = 
    \begin{cases}
        0 & \text{ if } \textbf{x} \in A\\
        \infty & \text{ otherwise}.
    \end{cases}
\end{equation*}
One can show that optimizing the discrete objective functional in~\eqref{dsingle} preserves the structure of~\eqref{1-1} at a fully discrete level.
\begin{theorem}[Structure-preserving property]\label{structure}Let $\boldsymbol{\mu}^{k + 1} = [\mu_{\alpha}^{k + 1}]_{\alpha = 1}^{n}$ be the solution to~\eqref{dsingle}, one has
\begin{enumerate}
    \item Energy dissipation: $\mathcal{E}_{h}[\boldsymbol{\mu}^{k + 1}] \le \mathcal{E}_{h}[\boldsymbol{\mu}^{k}]$.

    \item Mass conservation: $\sum_{\textnormal{\textbf{i}} \in \mathfrak{T}} \mu_{\alpha, \textnormal{\textbf{i}}}^{k + 1} = \sum_{\textnormal{\textbf{i}} \in \mathfrak{T}} \mu_{\alpha, \textnormal{\textbf{i}}}^{k}\quad \forall \alpha$.

    \item Bound preservation: $0 \preceq_{\mathcal{S}_{+}} M(\boldsymbol{\mu}^{k + 1}_{\textnormal{\textbf{i}}}) \quad \forall \textnormal{\textbf{i}} \in \mathfrak{T}$.
\end{enumerate}
\end{theorem}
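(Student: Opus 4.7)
The plan is to treat the three structural properties separately, deriving each from a different aspect of the minimization problem \eqref{dsingle}: energy dissipation from minimality against a carefully chosen competitor, mass conservation from the continuity constraint and discrete integration by parts, and bound preservation directly from the admissible set $H$.

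For energy dissipation, the key step is to observe that the pair $(\boldsymbol{\mu}^{k},\mathbf{0})$ is feasible for \eqref{dsingle}: the discrete continuity constraint $\boldsymbol{\mu}-\boldsymbol{\mu}^{k}+\mathcal{A}\textbf{m}=0$ reduces to $0=0$, the box constraint holds by the induction hypothesis on $\boldsymbol{\mu}^{k}$, and $f(M(\boldsymbol{\mu}^{k}_{\textbf{i}}),\mathbf{0})=0$ because $\mathbf{0}=M\cdot\mathbf{0}$ and the quadratic evaluates to zero. Since $\boldsymbol{\mu}^{k+1}$ is a minimizer, comparing its objective value to that of $(\boldsymbol{\mu}^{k},\mathbf{0})$ yields
\begin{equation*}
    \tau\mathcal{E}_{h}[\boldsymbol{\mu}^{k+1}]
    + \sum_{\textbf{i}\in\mathfrak{T}} f\bigl(M(\tfrac{1}{2}(\boldsymbol{\mu}^{k+1}_{\textbf{i}}+\boldsymbol{\mu}^{k}_{\textbf{i}})),(\mathcal{I}\textbf{m}^{k+1})_{\textbf{i}}\bigr)h^{d}
    \le \tau\mathcal{E}_{h}[\boldsymbol{\mu}^{k}].
\end{equation*}
The action sum is nonnegative (each term is either a nonnegative quadratic form $\tfrac{1}{2}\textbf{m}:M^{\dagger}\textbf{m}$ on the image of the PSD matrix $M$, or $+\infty$, but the latter is impossible at a minimizer since the feasible point above gives finite value), and dropping it yields $\mathcal{E}_{h}[\boldsymbol{\mu}^{k+1}]\le\mathcal{E}_{h}[\boldsymbol{\mu}^{k}]$.

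For mass conservation, I would sum the row-wise discrete continuity constraint $\mu^{k+1}_{\alpha,\textbf{i}}-\mu^{k}_{\alpha,\textbf{i}}+(\mathcal{A}\textbf{m}^{k+1}_{\alpha})_{\textbf{i}}=0$ over $\textbf{i}\in\mathfrak{T}$. Writing out the definition of $\mathcal{A}$, each interior face flux $m^{q,k+1}_{\alpha,\textbf{j}}$ appears exactly twice with opposite signs and cancels, while each boundary face flux vanishes because $\textbf{m}^{k+1}_{\alpha}\in\mathfrak{J}_{0}(\mathbb{R})$. Hence $\sum_{\textbf{i}}(\mathcal{A}\textbf{m}^{k+1}_{\alpha})_{\textbf{i}}=0$, and the result follows.

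For bound preservation, the admissible set $H$ in \eqref{addsmi} directly enforces $\boldsymbol{b}_{0}\le \mathcal{M}\boldsymbol{\mu}^{k+1}_{\textbf{i}}\le \boldsymbol{b}_{1}$ at every $\textbf{i}\in\mathfrak{T}$, and the matrix $\mathcal{M}$ together with the bounds $\boldsymbol{b}_{0},\boldsymbol{b}_{1}$ are selected precisely so that these inequalities characterize the region where $M(\boldsymbol{\mu})\succeq_{\mathcal{S}_{+}}0$ (cf.\ Remark~\ref{box5}). Consequently $0\preceq_{\mathcal{S}_{+}}M(\boldsymbol{\mu}^{k+1}_{\textbf{i}})$ for every $\textbf{i}$. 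I expect the only point requiring some care is the induction base and the well-posedness assumption: the argument tacitly assumes that $\boldsymbol{\mu}^{0}$ itself satisfies the box constraint (so that the competitor $(\boldsymbol{\mu}^{k},\mathbf{0})$ used in step one is feasible) and that the minimizer of \eqref{dsingle} exists. Both are standard hypotheses in this framework, and neither constitutes a real obstacle; the substantive content of the theorem lies entirely in combining feasibility of $(\boldsymbol{\mu}^{k},\mathbf{0})$ with nonnegativity of $f$ and the discrete divergence theorem.
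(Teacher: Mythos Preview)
Your proposal is correct and follows exactly the standard line of argument the paper intends: the paper does not spell out its own proof but simply states that it is ``identical to that of \cite[Theorem 1]{PDFB2024} with minor modifications,'' and the argument in that reference is precisely the one you give---comparison against the competitor $(\boldsymbol{\mu}^{k},\mathbf{0})$ plus nonnegativity of $f$ for energy dissipation, discrete divergence theorem with zero boundary flux for mass conservation, and the box constraint in $H$ for bound preservation.
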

The proof is identical to that of \cite[Theorem 1]{PDFB2024} with minor modifications.

By applying the support function form (Theorem~\ref{dual_lemma}) to the action function on each grid, we have the following discrete saddle point form corresponding to~\eqref{obj_phi}, which is our numerical scheme.
\begin{scheme}\rm
Let $\tau$ be the time step size and $h$ be the spatial grid size. Given $\boldsymbol{\mu}^{k}$, we approximate the solution $\boldsymbol{\mu}^{k + 1}$ of~\eqref{JKOMM}, and therefore the solution $\boldsymbol{\mu}(t)$ of~\eqref{1-1} at $t = (k + 1)\tau$, by finding the optimal $\boldsymbol{\mu}$ that solves
\begin{equation}\label{obj_sad}
    \begin{aligned}
        \min_{\boldsymbol{\mu}, \textbf{m}} \max_{Q, \boldsymbol{q}} \tau \widehat{\mathcal{E}}_{h}[\boldsymbol{\mu}] + \sum_{\textbf{i} \in \mathfrak{T}} M\big(\tfrac{1}{2}(\boldsymbol{\mu}_{\textbf{i}} + \boldsymbol{\mu}^{k}_{\textbf{i}})\big) : Q_{\textbf{i}} &+ (\mathcal{I}\textbf{m})_{\textbf{i}} : \boldsymbol{q}_{\textbf{i}}\\
        &- \sum_{\textbf{i} \in \mathfrak{T}}\iota_{K}\big(Q_{\textbf{i}}, \boldsymbol{q}_{\textbf{i}}\big) + \iota_{H}(\boldsymbol{\mu}, \textbf{m}),
    \end{aligned}
\end{equation}
where $\widehat{\mathcal{E}}_{h} = \mathcal{E}_{h}/h^{d}$ denotes the normalized energy functional.
\end{scheme}

To improve the practical application of the numerical scheme~\eqref{obj_sad}, we introduce a nonlinear preconditioning proposed in \cite{PDFB2024}. Let us consider a convex splitting of the energy, and denote,
\begin{equation*}
    \widehat{\mathcal{E}}_{h}[\boldsymbol{\mu}] = \mathcal{U}[\mathcal{K}\boldsymbol{\mu}] + \big(\widehat{\mathcal{E}}_{h}[\boldsymbol{\mu}] - \mathcal{U}[\mathcal{K}\boldsymbol{\mu}]\big) = \mathcal{U}[\mathcal{K}\boldsymbol{\mu}] + \mathcal{V}[\boldsymbol{\mu}],
\end{equation*}
where $\mathcal{U}$ is the preconditioner which requires to be convex, and $\mathcal{K}$ is a linear operator. By applying the Legendre transform to $\mathcal{U}$, we have the following preconditioned numerical scheme.
\begin{scheme}\rm
Let $\tau$ be the time step size and $h$ be the spatial grid size. Given $\boldsymbol{\mu}^{k}$, we approximate the solution $\boldsymbol{\mu}^{k + 1}$ of~\eqref{JKOMM}, and therefore the solution $\boldsymbol{\mu}(t)$ of~\eqref{1-1} at $t = (k + 1)\tau$, by finding the optimal $\boldsymbol{\mu}$ that solves
\begin{equation}\label{obj_split}
    \begin{aligned}
        \min_{\boldsymbol{\mu}, \textbf{m}} \max_{Q, \boldsymbol{q}, \boldsymbol{\nu}} \tau \mathcal{V}[\boldsymbol{\mu}] &+ \sum_{\textbf{i} \in \mathfrak{T}} M\big(\tfrac{1}{2}(\boldsymbol{\mu}_{\textbf{i}} + \boldsymbol{\mu}^{k}_{\textbf{i}})\big) : Q_{\textbf{i}} + (\mathcal{I}\textbf{m})_{\textbf{i}} : \boldsymbol{q}_{\textbf{i}}\\
        &+ (\mathcal{K}\boldsymbol{\mu})_{\textbf{i}} \cdot \boldsymbol{\nu}_{\textbf{i}} - \tau\mathcal{U}^{\ast}[\boldsymbol{\nu}/\tau]  - \sum_{\textbf{i} \in \mathfrak{T}}\iota_{K}(Q_{\textbf{i}}, \boldsymbol{q}_{\textbf{i}}) + \iota_{H}(\boldsymbol{\mu}, \textbf{m}),
    \end{aligned}
\end{equation}
where $\mathcal{U}^{\ast}$ is the convex conjugate of $\mathcal{U}$.
\end{scheme}

\section{PDFB splitting optimization}\label{sec:3}
In this section, we present our strategy to solve the saddle point problem~\eqref{obj_split}. Here we denote the primal variable and the dual variable as $\textbf{x}$ and $\textbf{y}$ respectively. With this notation, the optimization problem~\eqref{obj_split} can be then compactly written as follows,
\begin{equation}\label{oframe}
    \min_{\textbf{x} = (\boldsymbol{\mu}, \textbf{m})} \max_{\textbf{y} = (Q, \boldsymbol{q}, \boldsymbol{\nu})}\Phi(\textbf{x}, \textbf{y}) - F(\textbf{y}) + G(\textbf{x}).
\end{equation}
where
\begin{equation}\label{phi_form}
    \Phi(\textbf{x}, \textbf{y}) = \tau \mathcal{V}[\boldsymbol{\mu}] + \sum_{\textbf{i} \in \mathfrak{T}} M\big(\tfrac{1}{2}(\boldsymbol{\mu}_{\textbf{i}} + \boldsymbol{\mu}^{k}_{\textbf{i}})\big) : Q_{\textbf{i}} + (\mathcal{I}\textbf{m})_{\textbf{i}} : \boldsymbol{q}_{\textbf{i}} + (\mathcal{K}\boldsymbol{\mu})_{\textbf{i}} \cdot \boldsymbol{\nu}_{\textbf{i}}
\end{equation}
\begin{equation*}
    G(\textbf{x}) = \iota_{H}(\boldsymbol{\mu}, \textbf{m}) \text{ and } F(\textbf{y}) = \tau\mathcal{U}^{\ast}[\boldsymbol{\nu}/\tau] + \sum_{\textbf{i} \in \mathfrak{T}}\iota_{K}(Q_{\textbf{i}}, \boldsymbol{q}_{\textbf{i}}).
\end{equation*}
The optimization problem~\eqref{oframe} belongs to the class of general convex-concave saddle point problems, where $\Phi$ is continuously differentiable while $F$ and $G$ are convex but non-smooth.

\subsection{Algorithm}
In recent years, there are several optimization algorithms (see, for example, \cite{Malitsky2018, Hamedani2018, Zhu2022}) proposed to cope with general convex-concave saddle point problems~\eqref{oframe}. One of the key ideas behind these methods is the forward-backward splitting approach, in which one combines the gradient descents/ascents of $\Phi(\textbf{x}, \textbf{y})$ with the proximal gradient updates of $G(\textbf{x})$ and $F(\textbf{y})$ in each step of the iteration. Reflection techniques in operator splitting methods can also be considered to improve the optimization algorithms \cite{Chambolle2011, Yan2016, Malitsky2018}.

In the following, we apply the PDFB splitting method developed in our previous work \cite[Section 3]{PDFB2024} to solve the saddle point problem~\eqref{oframe}.

We introduce the Jacobi matrix $\mathcal{J}(\textbf{x})$ defined by
\begin{equation*}
    \mathcal{J}(\textbf{x})\textbf{w} = \lim_{\varepsilon \to 0} \frac{\nabla_{\textbf{y}} \Phi(\textbf{x} + \varepsilon \textbf{w}, \textbf{y}) - \nabla_{\textbf{y}} \Phi(\textbf{x}, \textbf{y})}{\varepsilon},\quad \forall \textbf{w} \in \mathbb{R}^{\text{dim}(\textbf{x})}
\end{equation*}
At each iterate $(\textbf{x}^{\ell}, \textbf{y}^{\ell})$, we update ($\gamma$ and $\overline{\gamma}$ are step size parameters of the algorithm)
\begin{align*}
    \textbf{y}^{\ell + 1} &= \text{prox}_{\gamma F}\big(\textbf{y}^{\ell} + \gamma \mathcal{J}(\textbf{x}^{\ell}) (\overline{\textbf{x}}^{\ell} - \textbf{x}^{\ell}) + \gamma\nabla_{\textbf{y}} \Phi(\textbf{x}^{\ell}, \textbf{y}^{\ell})\big)\\
    \textbf{x}^{\ell + 1} &= \text{prox}_{\overline{\gamma} G}\big(\textbf{x}^{\ell} - \overline{\gamma} \nabla_{\textbf{x}}\Phi(\textbf{x}^{\ell}, \textbf{y}^{\ell + 1})\big)\\
    \overline{\textbf{x}}^{\ell + 1} &= 2\textbf{x}^{\ell + 1} - \textbf{x}^{\ell} - \overline{\gamma} \big(\nabla_{\textbf{x}}\Phi(\textbf{x}^{\ell + 1}, \textbf{y}^{\ell + 1}) - \nabla_{\textbf{x}}\Phi(\textbf{x}^{\ell}, \textbf{y}^{\ell + 1})\big)
\end{align*}
where $\text{prox}_{F}$ dentoes the proximal operator of a function defined by:
\begin{equation*}
    \text{prox}_{F}(\textbf{x}_{0}) = \arg\min_{v} F(\textbf{x}) + \frac{1}{2}\|\textbf{x} - \textbf{x}_{0}\|^{2}.
\end{equation*}
For details concerning the motivation and analysis of the algorithm, please refers to \cite{PDFB2024}.

\subsection{Implementation details}
In each step of the PDFB splitting method, one needs to the compute derivatives of $\Phi(\textbf{x}, \textbf{y})$. For~\eqref{phi_form}, the gradients are trivial to evaluate by chain rules. In the following, we mainly discuss how to compute the proximal operators $G(\textbf{x})$ and $F(\textbf{y})$. It is important to note that the dual variables $(Q, \boldsymbol{q})$ and $\boldsymbol{\nu}$ are separable when computing the proximal operator of $F(\textbf{y})$. We will discuss them as two individual subproblems, where the proximal problem associated with $\boldsymbol{\nu}$ is referred to as the proximal problem of the nonlinear preconditioner.

\subsubsection{Proximal operator of primal variable}
The proximal problem associated with $\text{prox}_{\gamma G}$ amounts to solve a box constrained quadratic optimization problem for given $(\boldsymbol{\mu}_{0}, \textbf{m}_{0}) \in \big(\mathfrak{T}(\mathbb{R}) \times \mathfrak{J}_{0}(\mathbb{R})\big)^{n}$:
\begin{equation*}
    \begin{aligned}
        &\min_{\boldsymbol{\mu}, \textbf{m}} \frac{1}{2}\|\boldsymbol{\mu} - \boldsymbol{\mu}_{0}\|^{2} + \frac{1}{2}\|\textbf{m} - \textbf{m}_{0}\|^{2}\\
        &\text{ s.t. } \mu_{\alpha} - \mu_{\alpha}^{k} + \mathcal{A}\textbf{m}_{\alpha} = 0, \quad \forall 1 \le \alpha \le n,\\
        &\qquad b_{0, \beta} \le \sum_{\alpha = 1}^{n} \mathcal{M}_{\beta \alpha}\mu_{\alpha, \textbf{i}} \le b_{1, \beta}, \quad \forall \textbf{i} \in \mathfrak{T} \text{ and } 1 \le \beta \le p.
    \end{aligned}
\end{equation*}
This problem can be solved efficiently by the primal-dual active set method \cite[page 319]{Manzoni2021}, which is also considered in our previous work \cite{PDFB2024}. Let us consider the Karush-Kuhn-Tucker (KKT) optimality conditions:
\begin{equation}\label{KKT}
    \begin{aligned}
        &\begin{cases}
        \mu_{\alpha} - \mu_{0, \alpha} + \phi_{\alpha} + \sum_{\beta = 1}^{p}\mathcal{M}_{\beta\alpha}\lambda_{\beta} = 0\\
        \textbf{m}_{\alpha} - \textbf{m}_{0, \alpha} + \mathcal{A}^{T}\phi_{\alpha} = 0\\
        \mu_{\alpha} - \mu^{k}_{\alpha} + \mathcal{A}\textbf{m}_{\alpha} = 0\\
    \end{cases}
    \forall 1 \le \alpha \le n,\\
    &\quad \text{ and } C_{\beta}(\boldsymbol{\mu}, \lambda_{\beta}) = 0,\quad \forall 1 \le \beta \le p.
    \end{aligned}
\end{equation}
where $(\phi_{\alpha})$ and $(\lambda_{\beta})$ are the Lagrangian multipliers associated with the linear constraints and box constraints, respectively, and $C_{\beta}(\boldsymbol{\mu}, \lambda_{\beta})$ is the $\beta$-th complementary function corresponding to the box constraint:
\begin{equation*}
    C_{\beta}(\boldsymbol{\mu}, \lambda_{\beta}) = \lambda_{\beta} - \bigg[\lambda_{\beta} + \varepsilon_{0}\bigg(\sum_{\alpha = 1}^{n}\mathcal{M}_{\beta \alpha}\mu_{\alpha} - b_{0, \beta}\bigg)\bigg]_{-} - \bigg[\lambda_{\beta} + \varepsilon_{0}\bigg(\sum_{\alpha = 1}^{n}\mathcal{M}_{\beta \alpha}\mu_{\alpha} - b_{1, \beta}\bigg)\bigg]_{+}
\end{equation*}
with $\varepsilon_{0} > 0$ where we fix $\varepsilon_{0} = 1$ in our computation, and $[v]_{\pm}$ stands for the positive part and negative part of the vector $v$ respectively.

We now begin to describe the primal-dual active set. In this method, we iteratively update the variables using the following three steps in each iteration.
\vspace{6pt}

\noindent
1. Given the $\ell$-th update $(\boldsymbol{\mu}^{\ell}, \textbf{m}^{\ell})$, we compute the active sets (for all $1 \le \beta \le p$):
\begin{equation*}
    A^{\beta, -}_{\ell} = \Big\{\textbf{i} : \lambda_{\beta, \textbf{i}}^{\ell} + (\mathcal{M}\boldsymbol{\mu}^{\ell}_{\textbf{i}})_{\beta} < b_{0, \beta} \Big\} \text{ and } A^{\beta, +}_{\ell} = \Big\{\textbf{i} : \lambda_{\beta, \textbf{i}}^{\ell} + (\mathcal{M}\boldsymbol{\mu}^{\ell}_{\textbf{i}})_{\beta} > b_{1, \beta}\Big\},
\end{equation*}
and inactive set $\mathfrak{T}_{\ell}^{\beta} = \mathfrak{T} - (A^{\beta, -}_{\ell}\cup A^{\beta, +}_{\ell})$ for all $\beta$ from $1$ to $p$. Here $A^{\beta, \pm}_{\ell}$ denotes the failures to satisfy the $\beta$-th box constraint at $\ell$-th iterate. In order to formulate the optimality condition~\eqref{KKT} in a sparse linear system form, we concatenate all the variables $\{(\mu_{\alpha}, \textbf{m}_{\alpha})\}$ and auxiliary variables such that
\begin{equation*}
    \textbf{u}^{\ell + 1} = \left[
    \begin{array}{c}
        \mu_{1}^{\ell + 1}\\
        \vdots\\
        \mu_{n}^{\ell + 1}\\
        \textbf{m}_{1}^{\ell + 1}\\
        \vdots\\
        \textbf{m}_{n}^{\ell + 1}\\
    \end{array}\right],\,
    \textbf{u}_{0} = \left[
    \begin{array}{c}
        \mu_{0, 1}\\
        \vdots\\
        \mu_{0, n}\\
        \textbf{m}_{0, 1}\\
        \vdots\\
        \textbf{m}_{0, n}\\
    \end{array}\right],\, \textbf{v}^{\ell + 1} = \left[
    \begin{array}{c}
        \phi^{\ell + 1}\\
        \lambda_{1, A^{1, -}_{\ell}}^{\ell + 1}\\
        \lambda_{1, A^{1, +}_{\ell}}^{\ell + 1}\\
        \vdots\\
        \lambda_{p, A^{p, -}_{\ell}}^{\ell + 1}\\
        \lambda_{p, A^{p, +}_{\ell}}^{\ell + 1}
    \end{array}\right],\, \textbf{v}_{0} = \left[
    \begin{array}{c}
        \mu^{k}_{1}\\
        \vdots\\
        \mu^{k}_{n}\\
        b_{0, 1} e_{A^{1, -}_{\ell}}\\
        b_{1, 1} e_{A^{1, +}_{\ell}}\\
        \vdots\\
        b_{0, p} e_{A^{p, -}_{\ell}}\\
        b_{1, p} e_{A^{p,+}_{\ell}}
    \end{array}\right].
\end{equation*}
where $e_{A} = (1, \cdots, 1)^{T}$ is a vector with a length equals to the cardinality number $|A|$. Herein $\phi$ is a vector of the dimension $n \times |\mathfrak{T}|$, corresponding to the Lagrangian multiplier of the discrete continuity constraints, and $\lambda_{\beta, A^{\beta, \pm}}$ are the multipliers associated with the active sets.
\vspace{6pt}

\noindent
2. We update $\textbf{u}^{\ell + 1}$ and $\textbf{v}^{\ell + 1}$ by solving the following saddle point form:
\begin{equation*}
    \left[\begin{array}{cc}
        I & \mathcal{J}^{T}_{\ell} \\
        \mathcal{J}_{\ell} & 0
    \end{array}\right] \left[\begin{array}{c}
        \textbf{u}^{\ell + 1}\\
        \textbf{v}^{\ell + 1}
    \end{array}\right] = \left[\begin{array}{c}
        \textbf{u}_{0}\\
        \textbf{v}_{0}
    \end{array}\right].
\end{equation*}
where $\mathcal{J}_{\ell}$ is given by
\begin{equation*}
    \mathcal{J}_{\ell} =
    \left[\begin{array}{cccc}
        I_{n \times n} \otimes I & I_{n \times n} \otimes \mathcal{A}\\
        \mathcal{N}_{\ell}
    \end{array}\right]
\end{equation*}
where $I$ is the identity matrix of the size $\text{dim}(\textbf{u}^{\ell + 1})$, $\otimes$ is the Kronecker product, and
\begin{equation*}
    \mathcal{N}_{\ell} = \Big[\mathcal{M}_{\beta, \alpha} P_{A_{\ell}^{\beta, -}}\Big]_{\beta, \alpha = 1}^{p, n} = 
    \left[\begin{array}{cccc}
        \mathcal{M}_{1, 1} P_{A^{1, -}_{\ell}} & & \mathcal{M}_{1, n} P_{A^{1, -}_{\ell}}\\
        \mathcal{M}_{1, 1} P_{A^{1, +}_{\ell}} & & \mathcal{M}_{1, n} P_{A^{1, +}_{\ell}}\\
         & \ddots & \\
         & \ddots & \\
        \mathcal{M}_{1, p} P_{A^{p, -}_{\ell}} &  & \mathcal{M}_{n, p} P_{A^{p, -}_{\ell}}\\
        \mathcal{M}_{1, p} P_{A^{p, -}_{\ell}} &  & \mathcal{M}_{n, p} P_{A^{p, -}_{\ell}}\\
    \end{array}\right]
\end{equation*}
where $P_{A}$ the selection matrix such that $P_{A}\lambda$ is the vector of components of $\lambda$ on the index set $A$.
\vspace{6pt}

\noindent
3. After obtaining $\textbf{u}^{\ell + 1}$ and $\textbf{v}^{\ell + 1}$ using step 2, we can conversely recover the multiplier $(\lambda_{\beta})$ by
\begin{equation}\label{eq:lam_recover}
    \lambda^{\ell + 1}_{\beta} = P^{T}_{A^{\beta, -}_{\ell}}\lambda_{\beta, A^{\beta, -}_{\ell}}^{\ell + 1} + P^{T}_{A^{\beta, +}_{\ell}}\lambda_{\beta, A^{\beta, +}_{\ell}}^{\ell + 1}.
\end{equation}
In the active set method, we repeatedly solve the above system to obtain $\textbf{u}^{\ell + 1}$ and $\textbf{v}^{\ell + 1}$, and update the active sets $A^{\beta, \pm}_{\ell + 1}$ and the matrix $\mathcal{J}_{\ell+1}$ by recovering the multiplier $\lambda_{\beta}^{\ell + 1}$ by \eqref{eq:lam_recover} until the active sets remain unchanged, i.e., $A^{\beta, \pm}_{\ell + 1} = A^{\beta, \pm}_{\ell}$ for some $\ell$.

\begin{remark} We note that, if the bound of the solution is not coupled, i.e., $\mathcal{M} = I_{n \times n}$, the proximal operator $\text{prox}_{\gamma G}$ can be evaluated row-wise, and this amounts to solving the following problem for each row:
\begin{equation*}
    \begin{aligned}
        &\min_{\mu, \textbf{m}} \frac{1}{2}\|\mu - \mu_{\alpha, 0}\|^{2} + \frac{1}{2}\|\textbf{m} - \textbf{m}_{\alpha, 0}\|^{2},\\
        &\text{ s.t. }\mu - \mu^{k}_{\alpha} + \mathcal{A}\textbf{m} = 0,\quad b_{0, \alpha} \le \mu_{\textbf{i}} \le b_{1, \alpha},\quad \forall \textbf{i} \in \mathfrak{T}.
    \end{aligned}
\end{equation*}
for given $(\mu_{\alpha, 0}, \textbf{m}_{\alpha, 0}) \in \mathfrak{T}(\mathbb{R}) \times \mathfrak{J}(\mathbb{R})$. Closed form solutions to this problem has been given in \cite{PDFB2024}.
\end{remark}

\subsubsection{Proximal operator of nonlinear preconditioner}
The proximal problem associated with $\text{prox}_{\tau\mathcal{U}^{\ast}[\boldsymbol{\nu}/\tau]}$ amounts to solve the following smooth convex optimization problem
\begin{equation*}
    \min_{\boldsymbol{\nu}} \tau\mathcal{U}^{\ast}[\boldsymbol{\nu}/\tau] + \frac{1}{2\overline{\gamma}}\|\boldsymbol{\nu} - \boldsymbol{\nu}_{0}\|^{2}
\end{equation*}
for given $\boldsymbol{\nu}_{0}$. However, $\mathcal{U}^{\ast}$ is not always easy to compute explicitly for given $\mathcal{U}$. In this case, one can turn to Moreau’s identity (see, for example, \cite{Chambolle2011}):
\begin{equation*}
    \text{prox}_{\overline{\gamma}\tau \mathcal{U}^{\ast}[\boldsymbol{\nu}/\tau]}(\boldsymbol{\nu}_{0}) = \boldsymbol{\nu}_{0} - \overline{\gamma}\text{prox}_{\overline{\gamma}^{-1}\tau\mathcal{U}[\boldsymbol{\nu}]}[\boldsymbol{\nu}_{0}/\overline{\gamma}].
\end{equation*}
Alternatively, the proximal problem $\text{prox}_{\overline{\gamma}^{-1}\tau\mathcal{U}[\boldsymbol{\nu}]}[\boldsymbol{\nu}_{0}/\overline{\gamma}]$ amounts to solve the following smooth convex optimization problem
\begin{equation*}
    \min_{\boldsymbol{\nu}} \tau\mathcal{U}[\boldsymbol{\nu}] + \frac{\overline{\gamma}}{2}\|\boldsymbol{\nu} - \boldsymbol{\nu}_{0}/\overline{\gamma}\|^{2}.
\end{equation*}
In either cases, the proximal problem can be solved by Newton's method for smooth optimization
\begin{equation*}
    \min_{\boldsymbol{\nu}} F_{0}(\boldsymbol{\nu}) \Longrightarrow \boldsymbol{\nu}^{\ell + 1} = (\nabla^{2}F_{0}(\boldsymbol{\nu}^{\ell}))^{-1} (\boldsymbol{\nu}^{\ell} - \nabla F_{0}(\boldsymbol{\nu}^{\ell})).
\end{equation*}

\subsubsection{Proximal operator of dual variable}
The proximal problem associated with $\iota_{K}\big(Q_{0}, \boldsymbol{q}_{0}\big)$ amounts to solve the following projection problem for given $(Q_{0}, \boldsymbol{q}_{0}) \in \mathbb{R}^{n \times n}_{\text{sym}} \times \mathbb{R}^{n \times d}$
\begin{equation}\label{Dual-p}
    \min_{Q, \boldsymbol{q}} \frac{1}{2}\big\|Q - Q_{0}\big\|_{2}^{2} + \frac{1}{2}\big\|\boldsymbol{q} - \boldsymbol{q}_{0}\big\|_{2}^{2}\quad\text{ s.t. }Q + \frac{\boldsymbol{q}\boldsymbol{q}^{T}}{2} \preceq_{\mathcal{S}_{+}} 0,
\end{equation}
where $\|\boldsymbol{q}\|_{2} = (\boldsymbol{q}: \boldsymbol{q})^{\frac{1}{2}}$ denotes the Frobenuous norm. This matrix projection problem involves the control $Q$ of eigenvalue of matrices under a quadratic nonlinearity of $\boldsymbol{q}$. We introduce two possible methods for computing~\eqref{Dual-p}.

\vspace{6pt}

\noindent
1. A formalize approach is the alternative direction method of multipliers (ADMM). The Lagrangian of~\eqref{Dual-p} is given as follows,
\begin{equation*}
    \min_{Q, \boldsymbol{q}} \max_{0 \preceq_{\mathcal{S}_{+}} Z} \frac{1}{2}\big\|Q - Q_{0}\big\|^{2}_{2} + \frac{1}{2}\big\|\boldsymbol{q} - \boldsymbol{q}_{0}\big\|^{2}_{2} + Z : \bigg(Q + \frac{\boldsymbol{q} \boldsymbol{q}^{T}}{2}\bigg).
\end{equation*}
where $Z$ is the Lagrangian multiplier. We refer readers to \cite[Example 2.24]{Boyd2004} for the introduction of this Lagrangian multiplier. In each iterate $(Q^{\ell}, \boldsymbol{q}^{\ell}, Z^{\ell})$, we solve
\begin{equation*}
    \begin{aligned}
        Q^{\ell + 1} &= Q_{0} - Z^{\ell} \text{ and } \boldsymbol{q}^{\ell + 1} = (I + Z^{\ell})^{-1}\boldsymbol{q}_{0}\\
        Z^{\ell + 1} &= \text{proj}_{\mathcal{S}_{+}}(Z^{\ell} + \gamma R^{\ell + 1}) \text{ where } R^{\ell + 1} = Q^{\ell + 1} + \frac{\boldsymbol{q}^{\ell + 1} \big(\boldsymbol{q}^{\ell + 1}\big)^{T}}{2}
    \end{aligned}
\end{equation*}
where $\gamma > 0$ is the stepsize parameter, and $\text{proj}_{\mathcal{S}_{+}}$ is the projection onto the space of positive semi-definite matrices. Specifically, to conduct the projection $\text{proj}_{\mathcal{S}_{+}}$ for given $Z_{0}$, we first implement an eigenvalue decomposition, and then truncate its eigenvalues to non-negative real numbers:
\begin{equation*}
    \begin{aligned}
        Z_{0} = U_{Z} \,\text{diag}[\lambda_{\alpha}]_{\alpha = 1}^{n} U^{T}_{Z}, \quad \text{proj}_{\mathcal{S}_{+}}(Z_{0}) = U_{Z} \,\text{diag}[\max(0, \lambda_{\alpha})]_{\alpha = 1}^{n} U^{T}_{Z}.
    \end{aligned}
\end{equation*}

In our experiments, we find the parameters $\gamma$ within $[\frac{1}{2}, 1]$ perform near the optimal. For problems of the size $(n, d) = (2, 1)$, the violation of constrains in the ADMM iterations converges to error $10^{-12}$ usually within 20 steps when $\gamma = 1$. However, it performs depending on the property of $(Q_{0}, \boldsymbol{q}_{0})$ and the choice of $\gamma$, and would be inefficient for problems with large size, for example $(n, d) = (10, 3)$.

\vspace{6pt}

\noindent
2. We also introduce a Newton method. Let $Y = Q + \frac{1}{2}\boldsymbol{q}\boldsymbol{q}^{T}$. For fixed $\boldsymbol{q}$, one has
\begin{equation*}
    \min_{Q} \Big\{\|Q - Q_{0}\|_{2}^{2} : Q \preceq_{\mathcal{S}_{+}} - \tfrac{1}{2}\boldsymbol{q}\boldsymbol{q}^{T}\Big\} = \min_{Y} \Big\{\|Y - (Q_{0} + \tfrac{1}{2}\boldsymbol{q}\boldsymbol{q}^{T})\|_{2}^{2} : Y \preceq_{\mathcal{S}_{+}} 0\Big\}.
\end{equation*}
The RHS of the above equation has minimal expressed by
\begin{equation*}
    \|Y_{+}(\boldsymbol{q})\|_{2}^{2} \text{ where } Y(\boldsymbol{q}) := (Q_{0} + \tfrac{1}{2}\boldsymbol{q}\boldsymbol{q}^{T})
\end{equation*}
where $Y_{+}(\boldsymbol{q}) = \text{proj}_{\mathcal{S}_{+}}(Y(\boldsymbol{q}))$ is the positive part of the matrix. Hence, problem~\eqref{Dual-p} is equivalent to 
\begin{equation*}
    \min_{\boldsymbol{q}} F_{1}(\boldsymbol{q}) = \frac{1}{2}\|\boldsymbol{q} - \boldsymbol{q}_{0}\|_{2}^{2} + \frac{1}{2}\|Y_{+}(\boldsymbol{q})\|_{2}^{2}.
\end{equation*}
Once we get the optimal $\boldsymbol{q}_{\ast}$, we can recover
\begin{equation*}
    Q_{\ast} = Y_{-}(\boldsymbol{q}_{\ast}) - \frac{1}{2}\boldsymbol{q}_{\ast}\boldsymbol{q}_{\ast}^{T}
\end{equation*}
where $Y_{-} = \text{proj}_{\mathcal{S}_{-}}(Y)$ is the negative part of the matrix. To optimize $F_{1}(\boldsymbol{q})$, we note that $F_{1}(\boldsymbol{q})$ is convex and has a subgradient given by
\begin{equation*}
    \partial F_{1}(\boldsymbol{q}) = \boldsymbol{q} - \boldsymbol{q}_{0} + Y_{+}(\boldsymbol{q}) \boldsymbol{q},
\end{equation*}
Hence, it suffice to find the critical point $\partial F_{1}(\boldsymbol{q}) = 0$. To solve this equation, one can consider the Newton method, where in each iterate we update
\begin{equation}\label{Newton-q}
    \boldsymbol{q}^{\ell + 1} = \boldsymbol{q}^{\ell} - H(\boldsymbol{q}^{\ell})^{-1} \big[\partial F_{1}(\boldsymbol{q}^{\ell})\big].
\end{equation}
Here the Hessian operator $H(\boldsymbol{q})$ is given by product rule as follows,
\begin{equation*}
    H(\boldsymbol{q})[\boldsymbol{\xi}] = \lim_{\varepsilon \to 0} \frac{\partial F_{1}(\boldsymbol{q} + \varepsilon \boldsymbol{\xi}) - \partial F_{1}(\boldsymbol{q})}{\varepsilon} = \boldsymbol{\xi} + Y_{+}(\boldsymbol{q})\boldsymbol{\xi} + D\big(Y_{+}(\boldsymbol{q})\big) [\tfrac{1}{2}(\boldsymbol{\xi} \boldsymbol{q}^{T} + \boldsymbol{q} \boldsymbol{\xi}^{T})]\boldsymbol{q},
\end{equation*}
with the gradient $D\big(Y_{+}(\boldsymbol{q})\big)$ defined by
\begin{equation*}
    D\big(Y_{+}(\boldsymbol{q})\big)[Z] = V \big[\mathcal{A} \circ (V^{T}ZV)\big] V^{T},
\end{equation*}
where $Y_{+}(\boldsymbol{q}) = V\mathrm{diag}(\lambda_{1}, \cdots, \lambda_{n})V^{T}$ and $\circ$ is the Hadamard product, i.e., component wise product, and $\mathcal{A}$ is a matrix depending on the eigenvalue of $Y_{+}(\boldsymbol{q})$ defined by
\begin{equation*}
    \mathcal{A}_{\alpha \beta} =
    \begin{cases}
        \displaystyle \frac{(\lambda_{\alpha})_{+} - (\lambda_{\beta})_{+}}{\lambda_{\alpha} - \lambda_{\beta}} &\text{ if } \lambda_{\alpha} \neq \lambda_{\beta}\\
        \chi_{\{\lambda_{\alpha} > 0\}} &\text{ if } \lambda_{\alpha} = \lambda_{\beta}\\
    \end{cases}
\end{equation*}
where $(\lambda_{\alpha})_{+} = \max(0, \lambda_{\alpha})$, and $\chi_{A}$ is the characteristic function of set $A$. Once we have the information of the derivational derivative $H(\boldsymbol{q})[e_{\alpha \beta}]$ for each basis $e_{\alpha \beta}$ of $\mathbb{R}^{n \times d}$, we can expand $\partial F_{1}(\boldsymbol{q}^{\ell})$ as a linear combination of $e_{\alpha \beta}$, and formulate $H(\boldsymbol{q})$ as a matrix on the coefficients. In this case, the Newton iterate~\eqref{Newton-q} can be evaluated as a linear system of the coefficients of expansion.

In our experiments, error in the Newton iterate converges to error $10^{-12}$ usually within 10 steps.

\begin{remark}
For $n = 2$, the eigenvalue decomposition and the matrix inverse can be evaluated analytically, which greatly reduce the computation cost. In addition, a computer code for eigenvalue decomposition of matrix fields is available in \cite[Figure 7]{Goldluecke2012}.
\end{remark}

\section{Numerical results}\label{sec:4}
In this section, we apply the developed PDFB splitting method to computing several cross-diffusion gradient flows~\eqref{1-1}.
\begin{exam}[\cite{SKT, Sun2018, Carrillo2022}]\rm Consider the following SKT population model for $\boldsymbol{\mu} = (\mu_{1}, \mu_{2})$:
\begin{equation}\label{c1}
    M(\boldsymbol{\mu}) = \left[\begin{array}{cc}
        \mu_{1}(2\mu_{1} + \mu_{2}) & \mu_{1}\mu_{2} \\
        \mu_{2}\mu_{1} & \mu_{2}(\mu_{1} + 2 \mu_{2})
    \end{array}\right] \text{ and } \frac{\delta \mathcal{E}[\boldsymbol{\mu}]}{\delta \boldsymbol{\mu}} = \left[\begin{array}{c}
        \log(\mu_{1}) \\
        \log(\mu_{2})
    \end{array}\right].
\end{equation}
The free energy functional associated with this model is given by
\begin{equation*}
    \mathcal{E}[\boldsymbol{\mu}] = \int_{\Omega} \,\mu_{1}(\log(\mu_{1}) - 1) + \mu_{2}(\log(\mu_{2}) - 1) \,\mathrm{d}x.
\end{equation*}

In this example, we consider both one dimensional and two dimensional models. For $d = 1$, we consider the following initial value
\begin{equation*}
    \mu_{1}(x, 0) = \frac{1}{(2\pi)^{\frac{1}{2}}}\exp\left(\frac{-(x - \tfrac{1}{2})^{2}}{2}\right),\quad \mu_{2}(x, 0) = \frac{1}{(2\pi)^{\frac{1}{2}}}\exp\left(\frac{-(x + \tfrac{1}{2})^{2}}{2}\right).
\end{equation*}
In this case, we solve~\eqref{c1} on $\Omega = [-5, 5]$ with a grid spacing $h = 0.1$ and a time step size $\tau = 0.1$. For $d = 2$, we consider the following initial value
\begin{equation*}
    \begin{aligned}
        \mu_{1}(x, y, 0) &= \frac{1}{(2\pi)^{\frac{1}{2}}}\exp\left(\frac{-x^{2} -(y + \tfrac{1}{2})^{2}}{2}\right),\\
        \mu_{2}(x, y, 0) &= \frac{1}{(2\pi)^{\frac{1}{2}}}\exp\left(\frac{-x^{2} -(y - \tfrac{1}{2})^{2}}{2}\right).
    \end{aligned}
\end{equation*}
In this case, we solve~\eqref{c1} on $\Omega = [-5, 5] \times [-5, 5]$ with a grid spacing $h = 10/256$ and a time step size $\tau = 0.1$. In both cases, to ensure bound preservation $\boldsymbol{\mu} \ge 0$, we set $\mathcal{M} = I_{2 \times 2}$, $\boldsymbol{b}_{0} = [0; 0]$ and $\boldsymbol{b}_{1} = [\infty; \infty]$ in the box constraint.

The discrete free energy of the model is
\begin{equation*}
    \mathcal{E}_{h}[\boldsymbol{\mu}] = \sum_{\textbf{i} \in \mathfrak{T}} \mu_{1, \textbf{i}}(\log(\mu_{1, \textbf{i}}) - 1)h^{d} + \mu_{2, \textbf{i}}(\log(\mu_{2, \textbf{i}}) - 1)h^{d}.
\end{equation*}
As the Gibbs-Boltzmann entropy is ill-conditioned, one can consider the convex splitting approach by taking $\mathcal{U}[\boldsymbol{\mu}] = \widehat{\mathcal{E}}_{h}[\boldsymbol{\mu}]$. The convex conjugate to $\mathcal{U}$ can be explicitly computed as follows,
\begin{equation*}
    \mathcal{U}^{\ast}[\boldsymbol{\nu}] = \sum_{\textbf{i} \in \mathfrak{T}} e^{-\nu_{1, \textbf{i}}} + e^{-\nu_{2, \textbf{i}}} \text{ where } \boldsymbol{\nu} = (\nu_{1}, \nu_{2}).
\end{equation*}

\begin{figure}[h]
    \centering
    \includegraphics[width = \textwidth]{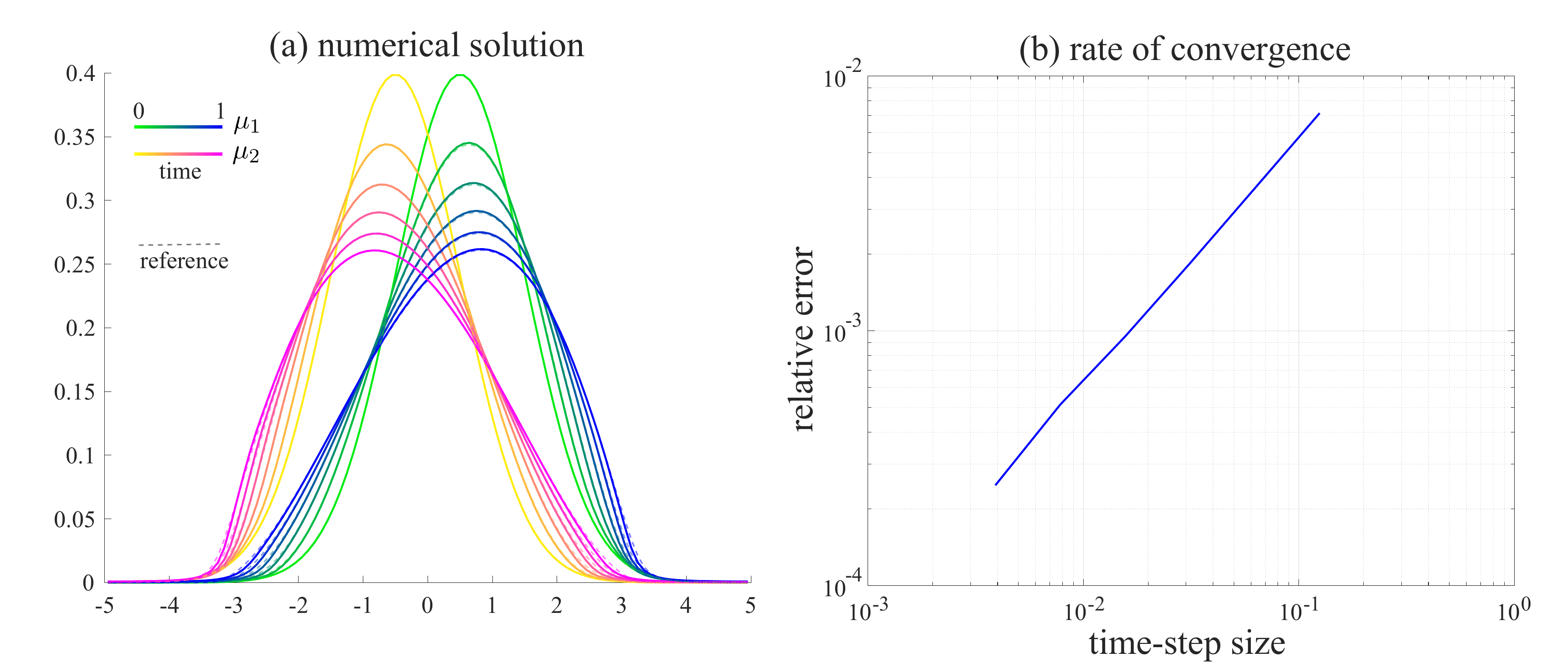}
    \caption{\rm Numerical results in \textit{Example 1} in one dimension. (a) The numerical solution at several time steps, where the dash-lines represent the reference solution. (b) Plot of the relative errors at $t = 1$ with respect to different time step sizes.}
    \label{fig:1-1}
\end{figure}

The numerical results are given in Figure~\ref{fig:1-1} ($d = 1$) and Figure~\ref{fig:1-2} ($d = 2$). We use reference solutions to justify our PDFB splitting method. These reference solutions are obtained by classical backward Euler method with a time step size $\tau = 10^{-3}$ on the same spatial grid with central difference approximation. In Figure~\ref{1-1} (a) and Figure~\ref{fig:1-2} (d, e), we can see that the numerical solutions of the PDFB splitting method closely matches the reference solutions. In addition, to show the accuracy of the time-discrete scheme, in Figure~\ref{1-1} (b), we plot the rate of convergence for the relative error with respect to a reference solution obtained by classical backward Euler method with $\tau = 10^{-4}$, and we adopt a grid spacing $h = 10/1024$ for both of the schemes in the comparison.

\begin{figure}
    \centering
    \includegraphics[width=\linewidth]{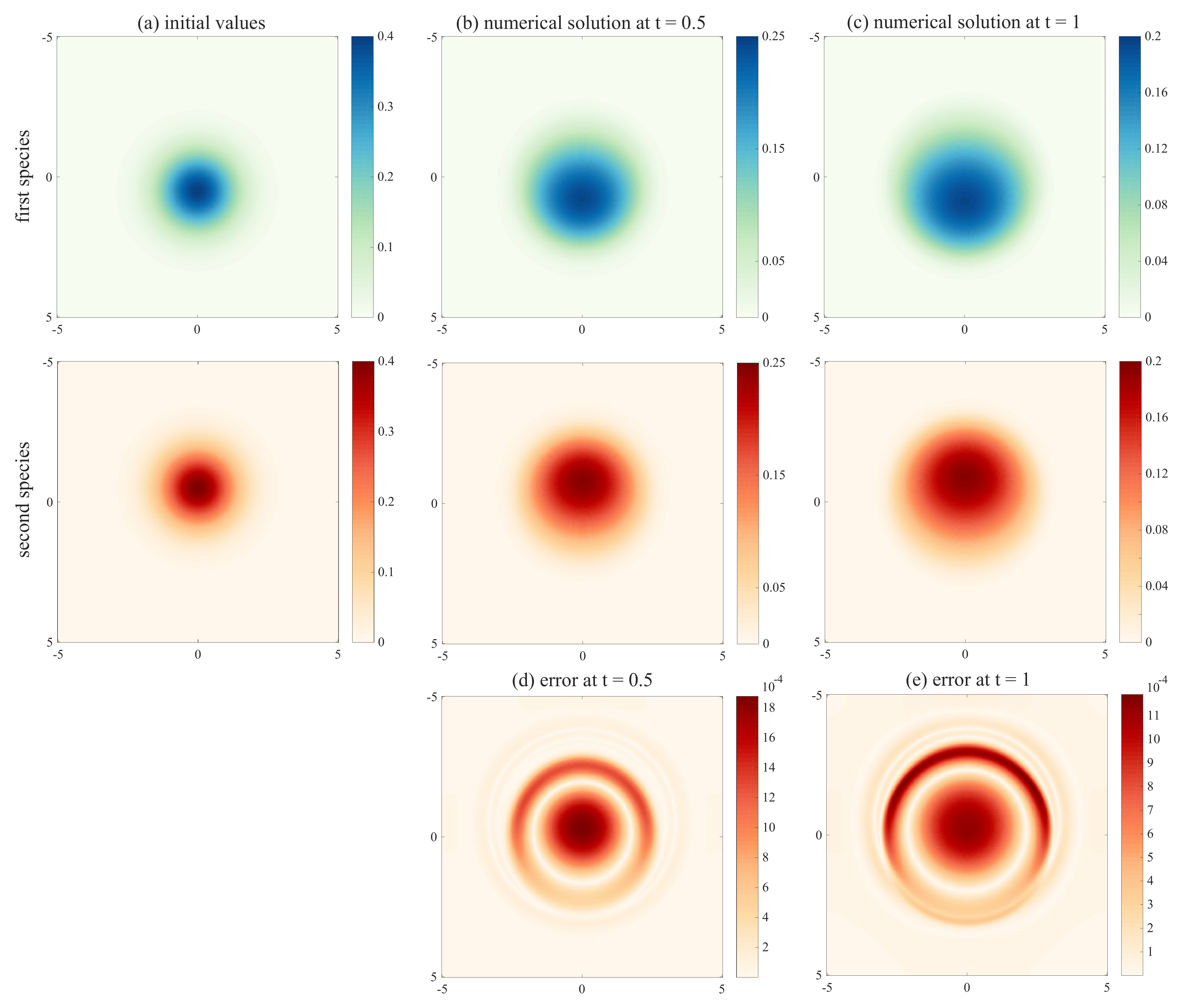}
    \caption{\rm Numerical results in \textit{Example 1} in two dimension. (a) The initial values of the two species, respectively. (b) Numerical solution of the two species at $t = 0.5$, respectively. (c) Numerical solution of the two species at $t = 1$, respectively. (d) The error of the numerical solution of the second species at $t = 0.5$. (e) The error of the numerical solution of the second species at $t = 1$. In (d) and (e), the error is evaluated via $|\mu_{2} - \overline{\mu}_{2}|$ where $\overline{\mu}_{2}$ is the reference solution.}
    \label{fig:1-2}
\end{figure}
\end{exam}

\begin{exam}[\cite{Barrett2003, Barrett2004, Thiele2012}]\rm Consider the following model describing a surfactant spreading on a thin viscous film for $\boldsymbol{\mu} = (\mu_{1}, \mu_{2})$:
\begin{equation}\label{example2}
    \begin{aligned}
        M(\boldsymbol{\mu}) = \left[\begin{array}{cc}
            \mu_{1}^{3}/3 & \mu_{1}^{2}\mu_{2}/2\\
            \mu_{1}^{2}\mu_{2}/2 & \mu_{1}\mu_{2}^{2} + \varepsilon \mu_{2}
        \end{array}\right] \text{ and }
        \frac{\delta \mathcal{E}[\boldsymbol{\mu}]}{\delta \boldsymbol{\mu}} = \left[\begin{array}{c}
            -\Delta \mu_{1}\\
            \log(\mu_{2})
        \end{array}\right],
    \end{aligned}
\end{equation}
where $\mu_{1}$ represents the thickness of the film, $\mu_{2}$ is the concentration of the surfactant. The free energy functional associated with this example is
\begin{equation*}
    \mathcal{E}[\boldsymbol{\mu}] = \int_{\Omega} \frac{1}{2}|\nabla \mu_{1}|^{2} + \mu_{2}\big(\log(\mu_{2}) - 1\big)\,\mathrm{d}x.
\end{equation*}

In this example, we use the following initial value considered \cite{Barrett2003},
\begin{equation*}
    \mu_{1}(x, 0) = 1,\quad \mu_{2}(x, 0) = \frac{1}{2}(1 - \tanh(10|x| - 5)).
\end{equation*}
Here we solve~\eqref{example2} on $\Omega = [-4, 4]$ with two different grid spacings, a coarser grid where $h = 1/128$ and a finer grid where $h = 1/512$, and with a time step size $\tau = 0.1$. To ensure bound preservation $\mu_{2} \ge 0$, we set $\mathcal{M} = I_{2 \times 2}$, $\boldsymbol{b}_{0} = [-\infty; 0]$ and $\boldsymbol{b}_{1} = [\infty; \infty]$ in the box constraint.
\begin{figure}[h]
    \centering
    \includegraphics[width = \textwidth]{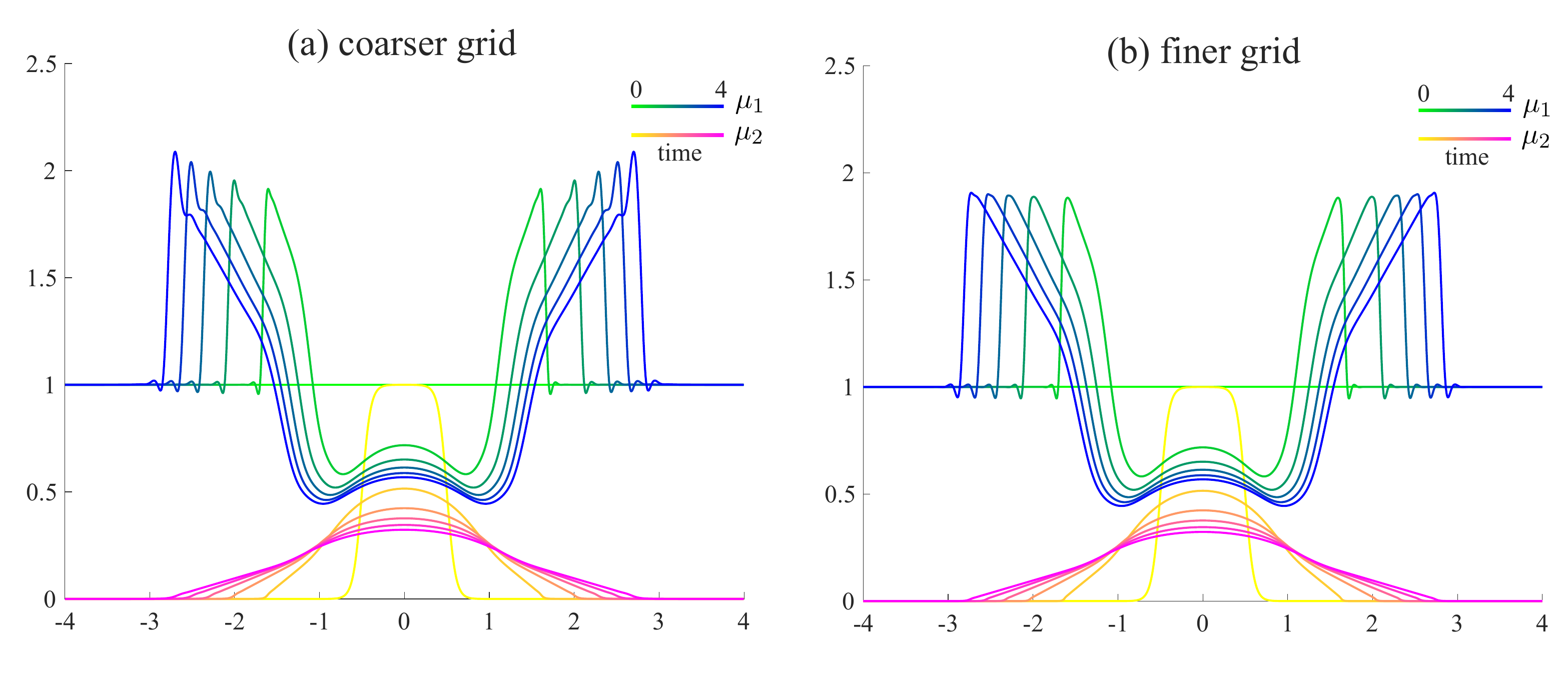}
    \caption{\rm Numerical results in \textit{Example 2}. (a) Numerical solution at several time steps obtained with a coarser grid spacing ($h = 1/128$). (b) Numerical solution at several time steps obtained with a finer grid spacing ($h = 1/512$).}\label{fig:5-2}
\end{figure}

The discrete free energy of the model in this example is
\begin{equation*}
    \mathcal{E}_{h}[\boldsymbol{\mu}] = \sum_{\textbf{j} \in \mathfrak{J}^{1}} \frac{1}{2} |\partial_{1, h} \mu_{1}|^{2}_{\textbf{j}} h + \sum_{\textbf{i} \in \mathfrak{T}} \mu_{2, \textbf{i}}(\log(\mu_{2, \textbf{i}}) - 1)h,
\end{equation*}
We consider the convex splitting though taking $\mathcal{U}[\boldsymbol{\mu}] = \widehat{\mathcal{E}}_{h}[\boldsymbol{\mu}]$. As the proximal problem associated with $\mathcal{U}[\boldsymbol{\mu}]$ is separable in $\mu_{1}$ and $\mu_{2}$, we handle it as two individual subproblems. Herein, the proximal problem of the Dirichlet energy can be evaluated using Moreau’s identity via solving the following quadratic optimization problem (for given $\nu_{1, 0}$):
\begin{equation*}
    \min_{\nu} \tau \sum_{\textbf{j} \in \mathfrak{J}^{1}} \frac{1}{2} |\partial_{1, h} \nu|^{2}_{\textbf{j}} + \frac{\overline{\gamma}}{2}\|\nu - \nu_{1, 0}/\overline{\gamma}\|^{2} \Longrightarrow \nu_{1} = (\overline{\gamma} I - \tau \Delta_{h})^{-1}\nu_{1, 0}
\end{equation*}
where $\Delta_{h}$ is the centered approximation of Laplacian on uniform grids. Furthermore, the proximal operator corresponding to $\sum_{\textbf{i} \in \mathfrak{T}} \mu_{2, \textbf{i}} \big(\log(\mu_{2, \textbf{i}}) - 1\big)$ can be solved the same way as \textit{Example 1}.

The numerical results are given in Figure~\ref{fig:5-2}. We observe a similar behavior of the numerical solution (Figure~\ref{fig:5-2} (a)) on the top of the shock as that in \cite[Figure 4.3]{Sun2018} when the spatial resolution is low, which can be addressed via increasing the spatial resolution. A reference to Figure~\ref{fig:5-2} (b) is given in \cite[Figure 4]{Barrett2003}.
\end{exam}

\begin{exam}[\cite{Jachalski2012}]\rm Consider the liquid two-layer thin film model for $\boldsymbol{\mu} = (\mu_{1}, \mu_{2})$
\begin{equation}\label{model3}
    \begin{aligned}
        &M(\boldsymbol{\mu}) = \left[\begin{array}{cc}
            \mu_{1}^{3}/3 & \mu_{1}^{3}/3 + \mu_{1}^{2}(\mu_{2} - \mu_{1})/2 \\
            \mu_{1}^{3}/3 + \mu_{1}^{2}(\mu_{2} - \mu_{1})/2 & (\mu_{2} - \mu_{1})^{3}/3 + \mu_{1}\mu_{2}(\mu_{2} - \mu_{1}) + \mu_{1}^{3}/3
        \end{array}\right]\\
        &\text{ and } \frac{\delta \mathcal{E}[\boldsymbol{\mu}]}{\delta \boldsymbol{\mu}} = \left[\begin{array}{c}
            -\nu \Delta \mu_{1} - P_{\varepsilon}^{\prime}(\mu_{2} - \mu_{1}) \\
            -\Delta \mu_{2} + P_{\varepsilon}^{\prime}(\mu_{2} - \mu_{1})
        \end{array}\right],
    \end{aligned}
\end{equation}
where $\mu_{1}, \mu_{2}$ represents the thickness of different layers, $\nu$ is the ratio of viscosities, and $P_{\varepsilon}$ represents a intermolecular potential between the two liquids. In this example, we consider the Lennard-Jones potential \cite{Oron1997} given by
\begin{equation*}
    P_{\varepsilon}(r) = \frac{\varepsilon^{8}}{8r^{8}} - \frac{\varepsilon^{2}}{2r^{2}}.
\end{equation*}
The free energy functional associated with this model is given by
\begin{equation*}
    \mathcal{E}(\boldsymbol{\mu}) = \int_{\Omega} \frac{\sigma}{2}|\nabla \mu_{1}|^{2} + \frac{1}{2}|\nabla \mu_{2}|^{2} + P_{\varepsilon}(\mu_{2} - \mu_{1}) \,\mathrm{d}x.
\end{equation*}

In this example, we set $\varepsilon = 0.01$, and use the following initial value,
\begin{equation*}
    \mu_{1}(x, 0) = \frac{3}{4} - \frac{1}{4}\cos\left(\frac{\pi x}{2}\right) \text{ and } \mu_{2}(x, 0) = 1 + \varepsilon.
\end{equation*}
Here we solve~\eqref{model3} on $\Omega = [-1, 1]$ with a grid spacing $h = 0.02$ and a time step size $\tau = 0.0001$. The box constraint is not considered in this example.

\begin{figure}[h!]
    \centering
    \includegraphics[width = \textwidth]{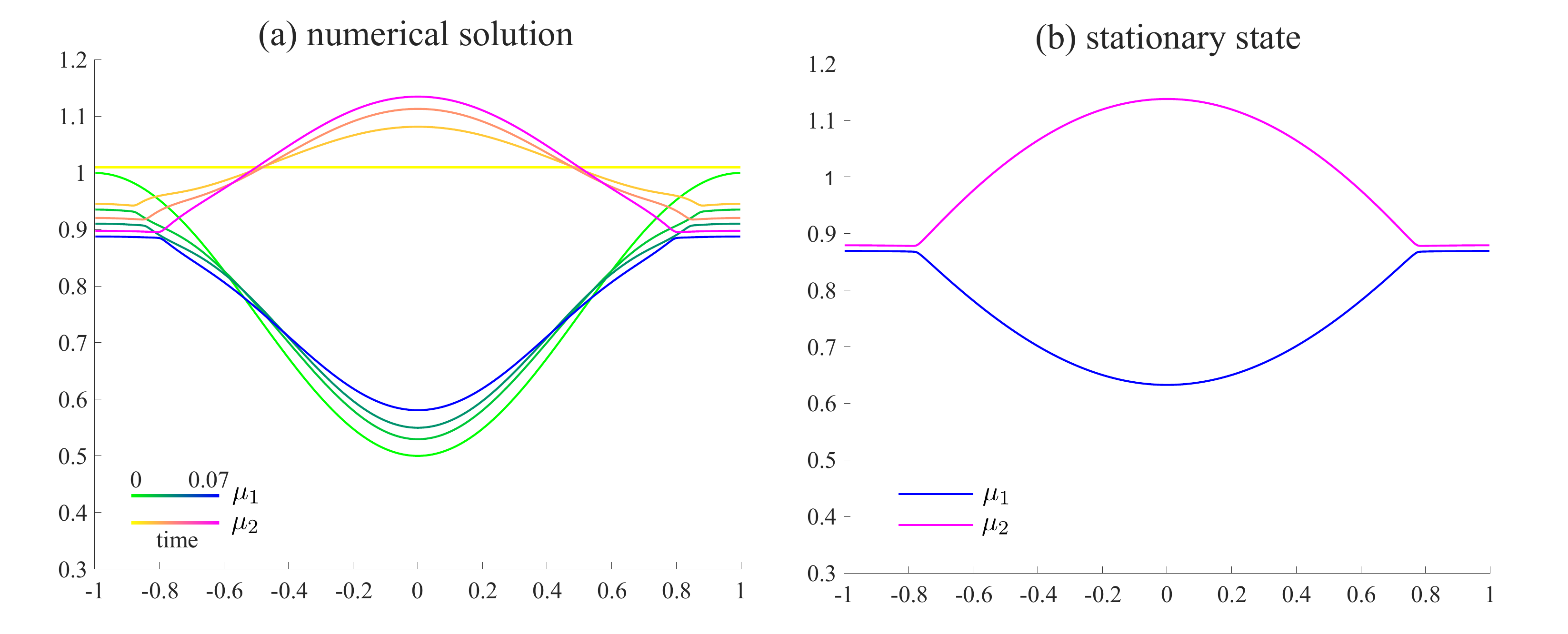}
    \caption{\rm Numerical results in \textit{Example 3}. (a) The numerical solution at several time steps. (b) The numerical solution computed at $t = 0.5$ which can be seen as an approximation the stationary state of the problem.}\label{fig:5-1}
\end{figure}

The discrete free energy of the model in this example is
\begin{equation*}
    \mathcal{E}_{h}[\boldsymbol{\mu}] = \sum_{\textbf{j} \in \mathfrak{J}^{1}} \frac{\sigma}{2} |\partial_{1, h} \mu_{1}|^{2}_{\textbf{j}} h + \frac{1}{2} |\partial_{1, h} \mu_{2}|^{2}_{\textbf{j}} h + \sum_{\textbf{i} \in \mathfrak{T}} P_{\varepsilon}(\mu_{2, \textbf{i}} - \mu_{1, \textbf{i}})h,
\end{equation*}
Since the Lennard-Jones potential $P_{\varepsilon}(\nu)$ is singular around $\nu \approx 0$, we consider the convex splitting via taking
\begin{equation*}
    \mathcal{U}[\mathcal{K} \boldsymbol{\mu}] = \sum_{\textbf{i} \in \mathfrak{T}}P_{\varepsilon}(\mu_{2, \textbf{i}} - \mu_{1, \textbf{i}}) \text{ and }
    \frac{\delta \mathcal{V}[\boldsymbol{\mu}]}{\delta \boldsymbol{\mu}} = \left[\begin{array}{c}
            -\sigma \Delta_{h} \mu_{1}\\
            -\Delta_{h} \mu_{2}
        \end{array}\right],
\end{equation*}
where $\mathcal{K}\boldsymbol{\mu} = \mu_{2} - \mu_{1}$. Herein, the proximal problem of the Lennard-Jones potential can be evaluated using the Moreau’s identity via solving the following optimization
\begin{equation*}
    \min_{\nu} \tau \sum_{\textbf{i} \in \mathfrak{T}} P_{\varepsilon}(\nu_{\textbf{i}}) + \frac{\overline{\gamma}}{2}\|\nu - \nu_{0}/\overline{\gamma}\|^{2},
\end{equation*}
which can be solved by several Newton iterates.

The numerical results are given in Figure~\ref{fig:5-1}. For large-time, we can see that the numerical solution converges to a stationary state Figure~\ref{fig:5-1} (b) that resembles the theoretical stationary state \cite[Fig 4.1]{Jachalski2012}.
\end{exam}

\begin{figure}[t!]
    \centering
    \includegraphics[width=0.9\linewidth]{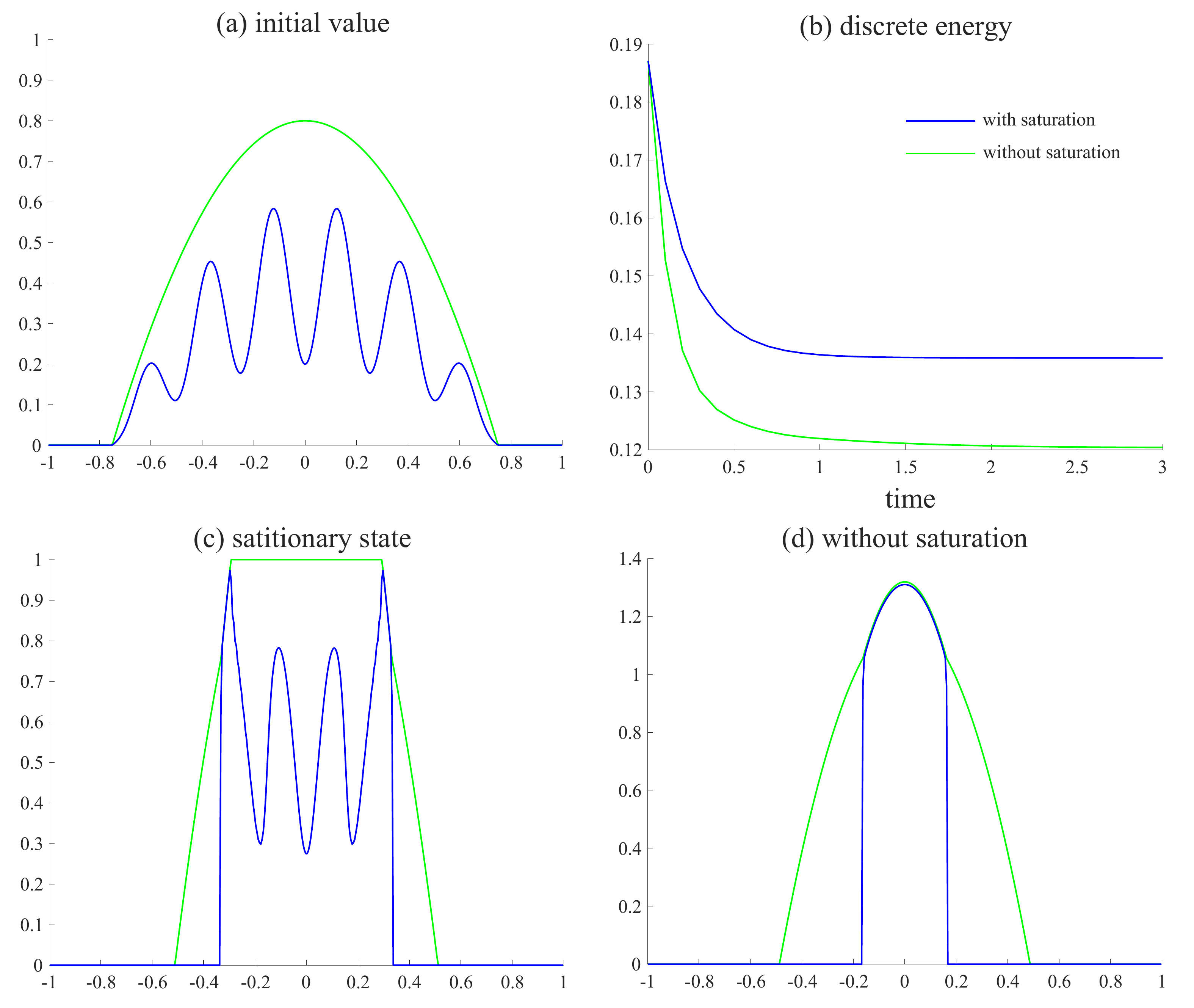}
    \caption{\rm Numerical results in \textit{Example 4}. (a) The plot of the initial value. (b) The evolution of the discrete free energies. (c) Stationary state of the problem (with saturation effect). (d) Stationary state of the problem without saturation, i.e., the mobility $M(\boldsymbol{\mu}) = \text{diag}(\boldsymbol{\mu})$. In (a), (c), and (d), the blue line represents the numerical solution of $\mu_{1}$, and the green color represents the line represents the saturation level, i.e., $\mu_{1} + \mu_{2}$.}
    \label{fig:5-5}
\end{figure}

\begin{exam}[\cite{Carrillo2022}]\rm Consider the following system of Fokker-Planck equations with a saturation effect for $\boldsymbol{\mu} = (\mu_{1}, \mu_{2})$
\begin{equation}\label{model4}
    \begin{aligned}
        M(\boldsymbol{\mu}) &= \left[\begin{array}{cc}
        \mu_{1}(1 - (\mu_{1} + \mu_{2})) & 0 \\
        0 & \mu_{2}(1 - (\mu_{1} + \mu_{2}))
    \end{array}\right]\\
    &\text{ and } \frac{\delta \mathcal{E}[\boldsymbol{\mu}]}{\delta \boldsymbol{\mu}} = \left[\begin{array}{c}
        a (\mu_{1} + \mu_{2}) + \sigma_{1}|x|^{2}/2 \\
        a (\mu_{1} + \mu_{2}) + \sigma_{2}|x|^{2}/2
    \end{array}\right].
    \end{aligned}
\end{equation}
The free energy functional associated with this model is given by
\begin{equation*}
    \mathcal{E}[\boldsymbol{\mu}] = \int_{\Omega} \frac{a}{2}(\mu_{1} + \mu_{2})^{2} + \frac{\sigma_{1}}{2}|x|^{2} \mu_{1} + \frac{\sigma_{2}}{2} |x|^{2} \mu_{2} \, \mathrm{d}x.
\end{equation*}

In this example, we set $a = 0.2$, $\sigma_{1} = 4$ and $\sigma_{2} = 2$, and use the following initial value,
\begin{equation*}
    \mu_{1}(x, 0) = \Big[f(x) \big(1 - \tfrac{1}{2}\cos(\omega x)\big)\Big]_{+} \text{ and } \mu_{2}(x, 0) = \Big[f(x) \big(1 + \tfrac{1}{2}\cos(\omega x)\big)\Big]_{+}.
\end{equation*}
where $f(x) = \frac{2}{5}\big(1 - (\frac{4}{3}x)^{2}\big)$, and $\omega = 8\pi$. Here we solve~\eqref{model4} on $\Omega = [-1, 1]$ with a grid spacing $h = 0.005$ and a time step size $\tau = 0.1$. To ensure bound preservation such that $M(\boldsymbol{\mu}) \ge 0$, we set the parameters in the box constraint as that in \textit{Remark~\ref{box5}}.

The discrete free energy of the model in this example is
\begin{equation*}
    \mathcal{E}_{h}[\boldsymbol{\mu}] = \sum_{\textbf{i} \in \mathfrak{T}} \frac{a}{2}(\mu_{1, \textbf{i}} + \mu_{2, \textbf{i}})^{2} h + \frac{\sigma_{1}}{2}|x_{\textbf{i}}|^{2} \mu_{1, \textbf{i}} h + \frac{\sigma_{2}}{2} |x_{\textbf{i}}|^{2} \mu_{2, \textbf{i}} h.
\end{equation*}
The convex splitting approach is not considered in this example.

The numerical results are given in Figure~\ref{fig:5-5}. We can see that the saturation $\mu_{1} + \mu_{2}$ are confined in the region $[0, 1]$ at the discrete level. 
\end{exam}

\section{Proof of the support function form}\label{sec:supp}
In this part, we prove the support function characterization in Theoerm~\ref{dual_lemma}. We let $f_{\text{supp}}(M, \textbf{m})$ be the support function form defined by
\begin{equation*}
    f_{\text{supp}}(M, \textbf{m}) = \max_{(Q, \boldsymbol{q}) \in K} M : Q + \textnormal{\textbf{m}} : \boldsymbol{q}.
\end{equation*}
In order to prove Theorem~\ref{dual_lemma}, we first show the fact that
\begin{equation}\label{sup_ineq}
    f_{\text{supp}}(M, \textbf{m}) \le f(M, \textbf{m}).
\end{equation}
We then find $(Q_{\ast}, \boldsymbol{q}_{\ast}) \in K$ such that 
\begin{equation}\label{sup_eqq}
    M : Q_{\ast} + \textnormal{\textbf{m}} : \boldsymbol{q}_{\ast} = f(M, \textbf{m}),
\end{equation}
for admissible pairs $(M, \textbf{m})$, and $f_{\text{supp}}(M, \textbf{m}) = \infty$ for non-admissible pairs $(M, \textbf{m})$.

\begin{proof}[Proof of Inequality~\eqref{sup_ineq}]
Consider the eigenvalue decomposition
\begin{equation*}
    M = U^{T} \Lambda U
\end{equation*}
where $U$ is an orthonormal matrix, and $\Lambda = \text{diag}[\lambda_{\alpha}]_{\alpha = 1}^{n}$ is a diagonal matrix. Applying this eigenvalue decomposition to $M$ in $f(M, \textbf{m})$, we can show that
\begin{equation}\label{decom-f}
    f(M, \textbf{m}) = \sum_{\alpha = 1}^{n}f_{0}(\lambda_{\alpha}, [U\textbf{m}]_{\alpha}),
\end{equation}
where $[U\textbf{m}]_{\alpha}$ is the $\alpha$-th row of the matrix $U\textbf{m}$, and the function $f_{0}$ is defined by,
\begin{equation*}
    f_{0}\big(\lambda_{\alpha}, \textbf{m}_{\alpha}\big) =
    \begin{cases}
        \displaystyle\frac{|\textbf{m}_{\alpha}|^{2}}{2\lambda_{\alpha}} & \lambda_{\alpha} > 0,\\
        0 & \big(\lambda_{\alpha}, \textbf{m}_{\alpha}\big) = (0, 0),\\
        \infty & \text{ otherwise. }\\
    \end{cases}
\end{equation*}
The structure of $f_{0}$ is well explored for scalar gradient flows \cite{Carrillo2019, Carrillo2023, PDFB2024}. In particular, we have the following support function form \cite{Benamou2000}:
\begin{equation*}
    f_{0}\big(\lambda_{\alpha}, [U\textbf{m}]_{\alpha}\big) = \max_{\overline{Q}_{\alpha}, \overline{\boldsymbol{q}}_{\alpha}} \lambda_{\alpha} \overline{Q}_{\alpha} + [U\textbf{m}]_{\alpha} \cdot \overline{\boldsymbol{q}}_{\alpha}\quad \text{ s.t. } \overline{Q}_{\alpha} + \frac{|\overline{\boldsymbol{q}}_{\alpha}|^{2}}{2} \le 0.
\end{equation*}
Then using this formula, we can see that
\begin{equation}\label{supp1}
    f(M, \textbf{m}) = \sum_{\alpha = 1}^{n} \max_{\overline{Q}_{\alpha}, \overline{\boldsymbol{q}}_{\alpha}} \lambda_{\alpha} \overline{Q}_{\alpha} + [U\textbf{m}]_{\alpha} \cdot \overline{\boldsymbol{q}}_{\alpha}\quad \text{ s.t. } \overline{Q}_{\alpha} + \frac{|\overline{\boldsymbol{q}}_{\alpha}|^{2}}{2} \le 0\quad \forall \alpha.
\end{equation}
However, such a support function characterization depends on the form of eigenvalue decomposition of the mobility $M$. To develop a support function as that in Theorem~\ref{dual_lemma}, we introduce the following matrix version of~\eqref{supp1},
\begin{equation*}
    f(M, \textbf{m}) = \max_{\overline{Q}, \overline{\boldsymbol{q}}}\Lambda : \overline{Q} + U\textbf{m} : \overline{\boldsymbol{q}}\quad \text{ s.t. } (\overline{Q}, \overline{\boldsymbol{q}}) \in \mathbb{R}_{\text{sym}}^{n \times n} \times \mathbb{R}^{n \times d} \text{ and } \overline{Q}_{\alpha} + \frac{|\overline{\boldsymbol{q}}_{\alpha}|^{2}}{2} \le 0\quad \forall \alpha.
\end{equation*}
where $\overline{Q}_{\alpha}$ is the $\alpha$-th diagonal element of the symmetrical matrix $\overline{Q}$, and $\overline{\boldsymbol{q}}_{\alpha}$ is the $\alpha$-th row of the matrix $\overline{\boldsymbol{q}}$. This support function form holds true because the matrix $\Lambda$ is zero outside the diagonal. Besides, the constraint $\overline{Q}_{\alpha} + \frac{1}{2}|\overline{\boldsymbol{q}}_{\alpha}|^{2} \le 0$ in~\eqref{supp1} is equivalent to the diagonal elements of the matrix $\overline{Q} + \frac{1}{2}\overline{\boldsymbol{q}}\overline{\boldsymbol{q}}^{T}$ being non-positive.

We then consider the change of variable $Q = U^{T} \overline{Q} U$ and $\boldsymbol{q} = U^{T}\overline{\boldsymbol{q}}$. By using the fact that $U$ is an orthonormal matrix, we have
\begin{equation}\label{duality}
    \begin{aligned}
        \Lambda : \overline{Q} + U\textbf{m} : \overline{\boldsymbol{q}} &= U^{T} \Lambda U : U^{T} \overline{Q} U + \textbf{m} : U^{T}\overline{\boldsymbol{q}}\\
        &= M : Q + \textbf{m} : \boldsymbol{q}.
    \end{aligned}
\end{equation}
We also note that
\begin{equation*}
    U\bigg(Q + \frac{\boldsymbol{q} \boldsymbol{q}^{T}}{2}\bigg)U^{T} = \overline{Q} + \frac{\overline{\boldsymbol{q}}\overline{\boldsymbol{q}}^{T}}{2}.
\end{equation*}
Hence, from~\eqref{duality} and by using the fact that
\begin{equation*}
    U_{\alpha}\bigg(Q + \frac{\boldsymbol{q} \boldsymbol{q}^{T}}{2}\bigg)U_{\alpha}^{T} = e_{\alpha}^{T}\bigg(\overline{Q} + \frac{\overline{\boldsymbol{q}}\overline{\boldsymbol{q}}^{T}}{2}\bigg)e_{\alpha} = \overline{Q}_{\alpha} + \frac{|\overline{\boldsymbol{q}}_{\alpha}|^{2}}{2} \le 0,
\end{equation*}
where $e_{\alpha}$ is the $\alpha$-th vector of the standard basis of $\mathbb{R}^{n}$ and $U_{\alpha} = e_{\alpha}^{T}U$, one can show that the action function $f$ can be characterized using the following support function form.
\begin{equation}\label{duality2}
    \begin{aligned}
        f(M, \textbf{m}) &= \max_{Q, \boldsymbol{q}} M : Q + \textbf{m} : \boldsymbol{q}\\
        &\text{ s.t. } (Q, \boldsymbol{q}) \in \mathbb{R}_{\text{sym}}^{n \times n} \times \mathbb{R}^{n \times d} \text{ and } U_{\alpha}\bigg(Q + \frac{\boldsymbol{q} \boldsymbol{q}^{T}}{2}\bigg)U_{\alpha}^{T} \le 0 \quad\forall \alpha.
    \end{aligned}
\end{equation}

Finally, we note that the admissible dual variables $(Q, \boldsymbol{q})$ are characterized by $Q + \frac{1}{2}\boldsymbol{q} \boldsymbol{q}^{T} \preceq_{\mathcal{S}_{+}} 0$ in $f_{\text{supp}}(M, \textbf{m})$. By using the variational characterization of eigenvalues, we have $U^{T}_{\ast}(Q + \frac{1}{2}\boldsymbol{q} \boldsymbol{q}^{T})U_{\ast} \le 0$ for all $U_{\ast} \in \mathbb{R}^{n}$. Hence, the condition $Q + \frac{1}{2}\boldsymbol{q} \boldsymbol{q}^{T} \preceq_{\mathcal{S}_{+}} 0$ is sufficient for the constraint in~\eqref{duality2}, which gives us
\begin{equation*}
    f(M, \textbf{m}) \ge f_{\text{supp}}(M, \textbf{m}) = \max_{Q, \boldsymbol{q}} M : Q + \textbf{m} : \boldsymbol{q} \quad \text{ s.t. } Q + \frac{\boldsymbol{q} \boldsymbol{q}^{T}}{2} \preceq_{\mathcal{S}_{+}} 0,
\end{equation*}
as maximization of the same objective function over a smaller set results in a smaller maximum.
\end{proof}

\begin{proof}[Construction of the recovery sequence~\eqref{sup_eqq}]~

\noindent
1. In the \textit{admissible} case where $M$ is positive semi-definite and invertible, i.e., $0 \preceq_{\mathcal{S}_{+}} M$ and there exists $\textbf{u}$ such that $\textbf{m} = M\textbf{u}$, we let
\begin{equation*}
    Q_{\ast} = -\frac{1}{2} M^{\dagger} \textbf{m} \textbf{m}^{T}M^{\dagger}\quad \boldsymbol{q}_{\ast} = M^{\dagger} \textbf{m}.
\end{equation*}
It is direct to show that $Q_{\ast} + \frac{1}{2}\boldsymbol{q}_{\ast} \boldsymbol{q}_{\ast}^{T} = 0$, and therefore $(Q_{\ast}, \boldsymbol{q}_{\ast}) \in K$. In addition, we have
\begin{equation*}
    M : Q_{\ast} + \textbf{m} : \boldsymbol{q}_{\ast} = \frac{1}{2}\textbf{m} : M^{\dagger} \textbf{m} = f(M, \textbf{m}).
\end{equation*}
which means that $f_{\text{supp}}(M, \textbf{m}) = f(M, \textbf{m})$.
\vspace{6pt}

\noindent
2. In the \textit{non-admissible} case where the mobility is not positive semi-definite $0 \not\prec_{\mathcal{S}_{+}} M$, we consider the eigenvalue decomposition of $M = U^{T}\Lambda U$, where $\Lambda = \text{diag}[\lambda_{\alpha}]_{\alpha = 1}^{n}$. Since $0 \not\prec_{\mathcal{S}_{+}} M$, we can always find an negative eigenvalue $\lambda_{\alpha}$ of $M$, and we then consider
\begin{equation*}
    Q_{n} = U^{T}\text{diag}\Big(\underbrace{[0, \cdots, -n, \cdots, 0]}_{\alpha\text{-th is non-zero}}\Big) U,\quad \boldsymbol{q}_{n} = 0.
\end{equation*}
Hence, we have $M : Q_{n} + \textbf{m} : \boldsymbol{q}_{n} = -n\lambda_{\alpha}$, which converges to $\infty$ as $n \to \infty$.
\vspace{6pt}

\noindent
3. In the \textit{non-admissible} case where $\textbf{m}$ is non-vanishing in the kernel of the mobility $M$, i.e., $\textbf{m} \neq M\textbf{u}$ for all $\textbf{u}$, we consider the eigenvalue decomposition $M = U^{T}\Lambda U$, where $\Lambda = \text{diag}[\lambda_{\alpha}]_{\alpha = 1}^{n}$. Since $\textbf{m} \neq M\textbf{u}$ for all $\textbf{u}$, there exists a row $(U\textbf{m})_{\alpha} \neq 0$ such that $\lambda_{\alpha} = 0$, we then let
\begin{equation*}
    Q_{n} = U^{T}\text{diag}\Big(\underbrace{[0, \cdots, -\tfrac{n^{2}}{2}|(U\textbf{m})_{\alpha}|^{2}, \cdots, 0]}_{\alpha\text{-th is non-zero}}\Big) U,\quad \boldsymbol{q}_{n} = nU^{T}
    \left[\begin{array}{c}
        0\\
        \vdots\\
        (U\textbf{m})_{\alpha}\\
        \vdots\\
        0
    \end{array}\right].
\end{equation*}
Hence, we have $M : Q_{n} + \textbf{m} : \boldsymbol{q}_{n} = n|(U\textbf{m})_{\alpha}|^{2}$, which converges to $\infty$ as $n \to \infty$.
\end{proof}

\section{Conclusion}\label{sec:5}
In this work, we constructed a primal-dual forward-backward (PDFB) splitting method for computing cross-diffusion gradient flows via solving their minimizing movements at the fully discrete level. By using the PDFB splitting method developed in \cite{PDFB2024}, we are able to propose a computational framework that can efficiently reduce the minimizing movements to a general convex-concave saddle point problem, where the action functionals involving mobility matrices are decoupled via auxiliary dual variables.

There are several future directions associated with this work. One is to develop numerical solvers for general reaction-diffusion systems \cite{MielkeAlexander2011, LieroMatthias2013Gsag} based on the current PDFB splitting method. For the numerical analysis, it would be crucial to prove the convergence of the minimizing movements to the cross diffusion gradient flows for general dimensions, where one dimensional case have been addressed in \cite{ZinslJonathan2015Tdag}. Furthermore, it can be important to prove the discrete-to-continuous convergence of the staggered grid approximated distance to the “continuous" transport distance~\eqref{SSD}, where these results have been shown for the Wasserstein distance and distances induced by scalar mobilities \cite{gigli2013gromov, gladbach2020scaling, garcia2020gromov, gladbach2020homogenisation}. Besides, the authors believe that the current PDFB splitting methods for cross-diffusion systems can be potentially extended to computing optimal transport of matrix valued measures \cite{brenier2020optimal} which finds its application in quantum mechanics \cite{chen2019interpolation}, and dynamics of incompressible fluid \cite{brenier2018initial}.

\section*{Acknowledgments}
The authors would like to thank Jos\'e A. Carrillo and Li Wang for their valuable observations and suggestions, and Rafael Bailo for his helpful discussion on Example 4.

\bibliographystyle{siamplain}

\end{document}